\newtheorem{theorem}{Theorem}[section]
\newtheorem{lemma}[theorem]{Lemma}
\newtheorem{corollary}[theorem]{Corollary}
\newtheorem{proposition}[theorem]{Proposition}
\newtheorem{theoremletter}{Theorem}
 \theoremstyle{definition}
 \newtheorem{definition}[theorem]{Definition}
  \newtheorem*{example*}{Example}
\numberwithin{equation}{section}
\newcommand {\N}{\mathbb{N}} 
\newcommand {\Z}{\mathbb{Z}}
\newcommand {\C}{\mathbb{C}}
\newcommand{\LL}{\mathcal{L}}
\newcommand{\PP}{\mathcal{P}}
\DeclareMathOperator{\Ker}{Ker}
\DeclareMathOperator{\M}{Mat}
\DeclareMathOperator{\Id}{Id}
\DeclareMathOperator{\supp}{supp}
\DeclareMathOperator{\Spec}{Spec}
\begin{document}
\title[Twisted group rings and noisy linear cellular automata]{Stable finiteness of twisted group rings and noisy linear cellular automata}    
\author[Xuan Kien Phung]{Xuan Kien Phung}
\address{Département d'informatique et de recherche opérationnelle,  Université de Montréal, Montréal, Québec, H3T 1J4, Canada.}
\email{phungxuankien1@gmail.com}   
\subjclass[2020]{16S34, 37B10, 37B15, 43A07, 68Q80}
\keywords{stable finiteness, group ring, non-uniform cellular automata}

\begin{abstract}
For linear non-uniform cellular automata (NUCA) which are local perturbations of linear CA over a group universe $G$ and a finite-dimensional vector space alphabet $V$ over an arbitrary field $k$, we investigate their Dedekind finiteness property, also known as the direct finiteness property,  i.e., left  or right invertibility implies invertibility. We say that the  group $G$ is $L^1$-surjunctive, resp. finitely $L^1$-surjunctive, if all such linear NUCA are automatically surjective whenever they are stably injective, resp. when in addition $k$ is finite.  In parallel, we introduce the ring $D^1(k[G])$ which is the Cartersian product $k[G] \times (k[G])[G]$ as an additive group but the multiplication is twisted in the second component. The ring $D^1(k[G])$ contains naturally the group ring $k[G]$ and we obtain a dynamical characterization of its stable  finiteness for every field $k$ in terms of the finite $L^1$-surjunctivity of the group $G$, which holds for example when $G$ is residually finite or initially subamenable. Our results extend known results in the case of CA.  
\end{abstract}
\date{\today}
\maketitle
%\tableofcontents 
  
\setcounter{tocdepth}{1}

\section{Introduction}
In this paper, we investigate and establish  the relation between some extensions of two well-known conjectures in symbolic dynamics and ring theory, namely, Gottschalk's surjunctivity conjecture and Kaplansky's stable finiteness conjecture. More specifically, given a group $G$, a field $k$, and a finite set $A$,   
Kaplansky 
conjectured \cite{kap} that the group ring $k[G]$ is stably finite, i.e., 
every one-sided invertible element of the ring of square matrices of size $n \times n$  with coefficients in $k[G]$ must be a two-sided unit, while Gottschalk's surjunctivity conjecture \cite{gottschalk}  states that every injective $G$-equivariant uniformly continuous self-map $A^G \righttoleftarrow$ must be surjective. 
It is known that every one-sided unit of $\C[G]$ must be a two-sided unit \cite{kap}. 
Moreover, both conjectures are known for the  wide class of sofic groups  introduced by Gromov (see \cite{gromov-esav},  \cite{weiss-sgds}, \cite{elek}, \cite{ara}, \cite{csc-artinian}, \cite{li-liang}, \cite{phung-geometric}) but they are still open in general. As an application of our main results, we obtain an extension of the known  equivalence (cf. \cite[Theorem~B]{phung-weakly}, \cite[Theorem~B]{phung-geometric}) between Kaplansky's stable finiteness and a weak form of Gottschalk's surjunctivity conjecture. More precisely, 
we establish the equivalence between the surjunctivity property of locally disturbed  linear cellular automata (CA)  and 
the stable finiteness of some twisted group rings (Theorem~\ref{t:main-first-intro}).  
\par 
To state the main results, let us recall some  notions of symbolic dynamics. 
Given a discrete set $A$ and a group $G$, a \emph{configuration} $c \in A^G$ is a map $c \colon G \to A$.  Two configurations $x,y  \in A^G$ are \emph{asymptotic} if  $x\vert_{G \setminus E}=y\vert_{G \setminus E}$ for some finite subset $E \subset G$.  
The \emph{Bernoulli shift} action $G \times A^G \to A^G$ is defined by $(g,x) \mapsto g x$, 
where $(gx)(h) =  x(g^{-1}h)$ for  $g,h \in G$,  $x \in A^G$. 
We equip the \emph{full shift} $A^G$ with the \emph{prodiscrete topology}. 
For $x \in A^G$, we define $\Sigma(x) = \overline{\{gx \colon g \in G\}} \subset A^G$ as the smallest subshift containing  $x$. 
Following the idea of von Neumann and Ulam  \cite{neumann}, a CA over a group $G$ (the \emph{universe}) and a set $A$ (the \emph{alphabet})  is  a self-map $A^G \righttoleftarrow$ which is $G$-equivariant and uniformly continuous (cf.~\cite{hedlund-csc}, \cite{hedlund}). 
When different cells can evolve according to different local transition maps, we obtain  \emph{non-uniform CA} (NUCA). More precisely, we have (cf. \cite[Definition~1.1]{phung-tcs}, \cite{Den-12a}, \cite{Den-12b}):   

\begin{definition}
\label{d:most-general-def-asyn-ca}
Let $G$ be a group and let  $A$ be a set. Let $M \subset G$ be a subset and let $S = A^{A^M}$ be the set of all maps $A^M \to A$. Given $s \in S^G$, the NUCA $\sigma_s \colon A^G \to A^G$ is defined for all $x \in A^G$ and $g \in G$ by the formula 
\begin{equation*}
\sigma_s(x)(g)=  
    s(g)((g^{-1}x)  
	\vert_M). 
 \end{equation*}
 \end{definition} 
\par 
The set $M$ is called a \emph{memory} and $s \in S^G$  the \textit{configuration of local defining maps} of $\sigma_s$.  Every CA is thus a NUCA with finite memory and constant configuration of local defining maps. 
Following \cite{phung-tcs}, we say that  $\sigma_s$ is \emph{invertible} if it is bijective and the inverse map  $\sigma_s^{-1}$ is a NUCA with \emph{finite} memory. 
Moreover, $\sigma_s$ is \emph{left-invertible}, resp. \emph{right-invertible}, if $\tau \circ \sigma_s= \Id$, resp. $\sigma_s\circ \tau= \Id$, for some  NUCA $\tau \colon A^G \to A^G$ with \textit{finite} memory.  The NUCA $\sigma_s$ is \emph{pre-injective} if $\sigma_s(x) = \sigma_s(y)$ implies $x= y$ whenever $x, y \in A^G$ are asymptotic, and $\sigma_s$ is \emph{post-surjective} if for all $x, y \in A^G$ with $y$ asymptotic to $\sigma_s(x)$,  then   $y= \sigma_s(z)$ for some $z \in A^G$ asymptotic to $x$. We say that $\sigma_s$ is  \emph{stably injective} if  $\sigma_p$ is injective  for every $p \in \Sigma(s)$.  Similarly,  $\sigma_s$ is \emph{stably post-surjective} if $\sigma_p$ is post-surjective for every $p \in \Sigma(s)$. 
\par 
If $A$ is a vector space, $A^G$ is naturally a vector space with component-wise operation and we call a NUCA $\tau \colon A^G \to A^G$ \emph{linear} if it is also a linear map of vector spaces. Clearly, $\tau$ is a linear NUCA if and only if its local transition maps are all linear. Such linear NUCA with finite memory are interesting dynamical objects since they satisfy the shadowing property \cite{phung-shadowing}, \cite{phung-dual-linear-nuca}.   
\par 
\begin{definition}
    Given a group $G$ and a vector space $V$, we denote by $\mathrm{LNUCA}_{c}(G, V)$ the space of all linear NUCA $\tau \colon V^G \to V^G$ with finite memory which admit asymptotically constant configurations of local defining maps, i.e., $\tau \in \mathrm{LNUCA}_{c}(G, V)$ if there exist  finite subsets $M, E \subset G$ and $s \in \LL(V^M, V)^G$ such that $\tau=\sigma_s$ and $s(g)=s(h)$ for all $g,h \in G \setminus E$.  
\end{definition}
\par 
Let $G$ be a group and let $k$ be a field, it is not hard to deduce from \cite[Theorem~6.2]{phung-tcs} that $\mathrm{LNUCA}_{c}(G, k^n)$ is a $k$-algebra whose multiplication is given by the composition of maps and whose addition is component-wise. 
\par 
In parallel, we can define a generalization of the group ring $k[G]$, namely $D^1(k[G])$, which is given as the product  
$D^1(k[G])= k[G] \times (k[G])[G]$ with component-wise addition but where the multiplication is given by: 
%$* \colon D^1(k[G]) \times D^1(k[G]) \to D^1(k[G])$ as follows: 
\[
(\alpha_1, \beta_1) * (\alpha_2, \beta_2) = (\alpha_1 \alpha_2, \alpha_1 \beta_2 + \beta_1 \alpha_2 + \beta_1 \beta_2). 
\]
\par 
Here, the product $\alpha_1 \alpha_2$ is computed with the multiplication rule in the group ring $k[G]$ so that $k[G]$ is naturally a subring of $D^1(k[G])$ via the map $\alpha \mapsto (\alpha, 0)$. However,  $\alpha_1 \beta_2$,  $\beta_1 \alpha_2$, $\beta_1 \beta_2$ are  \emph{twisted products} (see  Definition~\ref{d:d-1-k-g}) which are  different from the products  computed with the multiplication rule of the group ring $(k[G])[G]$  with coefficients in $k[G]$. 
\par 
By \cite{csc-sofic-linear}, there exists a canonical ring isomorphism  between $M_n(k[G])$ and the ring $\mathrm{LCA}(G,k^n)$ of all linear CA $(k^n)^G \to (k^n)^G$.  
Extending the above isomorphism, we can also  interpret $\mathrm{LNUCA}_c(G,k^n)$ algebraically in terms of the ring $M_n(D^1(k[G]))$ as follows (see Theorem~\ref{t:iso-ring-first} and Proposition~\ref{p:iso-ring-second}): 
\begin{theoremletter}
\label{t:intro-iso-ring-first}
For every field $k$ and every infinite group $G$, there exists a canonical isomorphism $ \mathrm{LNUCA}_{c}(G, k^n)\simeq M_n(D^1(k[G]))$ for every $n \geq 1$. 
\end{theoremletter}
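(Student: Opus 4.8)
The plan is to realize the claimed isomorphism as the combination of two ingredients: the classical identification of linear CA with the matrix group ring, and an explicit bookkeeping of the finitely supported perturbations. The starting point is the canonical ring isomorphism $\mathrm{LCA}(G,k^n)\simeq M_n(k[G])$ of \cite{csc-sofic-linear}, under which a linear CA with constant local rule $\mu=\sum_{m\in M}\mu_m$, $\mu_m\in\End(k^n)=M_n(k)$, corresponds to $\sum_{m\in M}\mu_m[m]$. First I would set up the canonical decomposition of an arbitrary $\tau=\sigma_s\in\mathrm{LNUCA}_{c}(G,k^n)$, where $s\in\LL((k^n)^M,k^n)^G$ is constant equal to some $s_\infty$ off a finite set $E$: write $s=s_\infty+t$ with $t=s-s_\infty$ supported on $E$, and for each $g$ write the local rule as $t(g)=\sum_{m\in M}t(g)_m$ with $t(g)_m\in M_n(k)$ its component at the memory position $m$. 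Here the hypothesis that $G$ is \emph{infinite} is essential: it guarantees that the asymptotic value $s_\infty$ is uniquely determined by $s$, being the common value of $s$ on the infinite set $G\setminus E$, so that the splitting into a constant part plus a finitely supported part is canonical. (For finite $G$ no such splitting exists, and indeed $\dim_k\End_k((k^n)^G)=n^2|G|^2$ while $\dim_k M_n(D^1(k[G]))=n^2(|G|+|G|^2)$, so the statement genuinely fails there.)

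I then define $\phi(\sigma_s)=(\alpha,\beta)$, where $\alpha\in M_n(k[G])$ is the image of the linear CA $\sigma_{s_\infty}$ under the isomorphism above, and the perturbation is encoded as
\[
\beta=\sum_{g\in E}\Big(\sum_{m\in M}t(g)_m[m]\Big)\langle g\rangle\in M_n\big((k[G])[G]\big),
\]
the inner group‑ring element recording the local rule at the defect $g$ and the outer element $\langle g\rangle$ recording its location. Next I would check that $\phi$ is an additive bijection. Additivity is immediate from the linearity of $s\mapsto(s_\infty,t)$ and of both encodings. Injectivity follows because a NUCA determines its local rules with respect to a common memory, hence the pair $(s_\infty,t)$, hence $(\alpha,\beta)$; surjectivity follows because any $(\alpha,\beta)$ consists of finite data, from which one reconstructs a finite memory $M$ (enlarging to a common finite memory only adds zero coefficients and changes nothing), a finite defect set $E$, and local rules $s=s_\infty+t$ with $\phi(\sigma_s)=(\alpha,\beta)$.

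The main work, and the main obstacle, is multiplicativity, $\phi(\sigma_s\circ\sigma_{s'})=\phi(\sigma_s)*\phi(\sigma_{s'})$. Writing $s=s_\infty+t$, $s'=s'_\infty+t'$, bilinearity of composition gives four terms. The term $\sigma_{s_\infty}\circ\sigma_{s'_\infty}$ is again a CA with no defect and contributes $\alpha\alpha'$ in the first component, since the CA isomorphism of \cite{csc-sofic-linear} is multiplicative. The three remaining terms are each finitely supported perturbations (one checks directly that composing a perturbation with any element of $\mathrm{LNUCA}_{c}(G,k^n)$ on either side leaves the local rules finitely supported), and the plan is to verify by direct index computation that $\sigma_{s_\infty}\circ\sigma_{t'}$, $\sigma_t\circ\sigma_{s'_\infty}$, and $\sigma_t\circ\sigma_{t'}$ encode respectively to the three twisted products $\alpha\beta'$, $\beta\alpha'$, and $\beta\beta'$ of Definition~\ref{d:d-1-k-g}, so that their sum reproduces exactly the second component $\alpha\beta'+\beta\alpha'+\beta\beta'$ of $(\alpha,\beta)*(\alpha',\beta')$.

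The delicate point is the cross term $\sigma_{s_\infty}\circ\sigma_{t'}$. Since the outer CA reads the perturbed configuration at the shifted cells $gm$, a defect of $t'$ located at $g'$ produces an output defect at $g'm^{-1}$, and the memory position $m$ of $s_\infty$ simultaneously shifts the outer index and multiplies the inner group‑ring coefficient; the term $\sigma_t\circ\sigma_{t'}$ exhibits the same shift with both indices varying, whereas $\sigma_t\circ\sigma_{s'_\infty}$ involves no shift because the outer defect is fixed. This is precisely the twisting built into the second component of $D^1(k[G])$, so the entire content of the theorem is that this dynamical bookkeeping matches the algebraic definition of the twisted products. Carefully tracking these indices, together with the observation that the $M_n(k)$‑coefficients multiply in the order dictated by composition (matching matrix multiplication in $M_n(D^1(k[G]))$), completes the verification that $\phi$ is a $k$‑algebra isomorphism.
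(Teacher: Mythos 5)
Your proposal is correct and is essentially the paper's own proof: the paper establishes Theorem~\ref{t:intro-iso-ring-first} by exhibiting the map $\omega=(\alpha,\beta)\mapsto\tau^\omega$ with $\tau^\omega(x)(g)=\sum_{h}\alpha(h)x(gh)+\sum_{h}\beta(g)(h)x(gh)$ (Theorem~\ref{t:iso-ring-first}), whose inverse is exactly your encoding of ``constant part plus finitely supported perturbation,'' and then handles the matrix bookkeeping via the canonical isomorphism $D^1(M_n(k)[G])\simeq M_n(D^1(k[G]))$ (Proposition~\ref{p:iso-ring-second}), which you absorb directly into your definition of $\beta$. The only differences are organizational: you construct the map in the opposite direction and split the multiplicativity check into four bilinear cross terms, while the paper verifies it in one computation, and your matching of the cross terms $\sigma_{s_\infty}\circ\sigma_{t'}$, $\sigma_t\circ\sigma_{s'_\infty}$, $\sigma_t\circ\sigma_{t'}$ with the twisted products $\alpha\beta'$, $\beta\alpha'$, $\beta\beta'$ is precisely the paper's index computation (including the correct use of infiniteness of $G$ for injectivity/canonicity of the splitting).
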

\par 
In \cite{csc-sofic-linear} and \cite{phung-weakly} respectively, the authors study the  
\emph{$L$-surjunctivity} and the \emph{linear surjunctivity} of a group, namely,  a group $G$ is  $L$-surjunctive, resp. linearly  surjunctive if for every finite-dimensional vector space $V$, resp. finite vector space $V$,   every injective $\tau  \in \mathrm{LCA}(G,V)$ is also surjective. It was shown  that all sofic groups are $L$-surjunctive \cite{gromov-esav}, \cite{csc-sofic-linear}. Notably, we know from \cite{csc-sofic-linear} that a group $G$ is $L$-surjunctive if and only if $k[G]$ is stably finite for every field $k$. Moreover, results in \cite{phung-weakly} show that $L$-surjunctivity and linearly surjunctivity are equivalent notions. In this vein, we introduce the following various notions of surjunctivity in the case of  linear NUCA.  

\begin{definition}
\label{d:intro-l-1-surjunctive}
Let $G$ be a group. 
We  say that  $G$ is \emph{$L^1$-surjunctive}, resp. \emph{finitely $L^1$-surjunctive},  if for every finite-dimensional vector space $V$, resp. for every  finite vector space $V$,  every stably  injective $\tau  \in \mathrm{LNUCA}_c(G,V)$ is also surjective. 
\end{definition}
\par 
In the line of some recent results which establish the multifold interaction between symbolic dynamics, group theory, and ring theory such as \cite{bartholdi-kielak}, \cite{cscp-jpaa}, \cite{phung-israel}, \cite{phung-geometric}, \cite{phung-weakly}, etc. our main result is the following: 
\begin{theoremletter}
\label{t:main-first-intro}
For every infinite group $G$, the following are equivalent:  
\begin{enumerate}[\rm (i)]
\item 
 $G$ is $L^1$-surjunctive; 
\item 
 $G$ is finitely $L^1$-surjunctive;
\item for every field $k$, the ring $D^1(k[G])$ is stably finite; 
\item 
for every finite field $k$, the ring $D^1(k[G])$ is stably finite; 
\item 
$G$ is dual $L^1$-surjunctive; 
\item 
$G$ is finitely dual $L^1$-surjunctive. 
\end{enumerate}
\end{theoremletter}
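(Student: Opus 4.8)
The plan is to partition the six conditions into the three trivial implications, the dynamical--algebraic bridge provided by Theorem~\ref{t:intro-iso-ring-first}, a specialization argument reducing arbitrary fields to finite ones, and a duality argument absorbing the dual notions. The implications (i)$\Rightarrow$(ii), (iii)$\Rightarrow$(iv) and (v)$\Rightarrow$(vi) are immediate, since a finite vector space is in particular finite-dimensional and a finite field is in particular a field; so the real content lies in closing the cycle in the opposite directions.

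The heart of the argument is a dictionary between the dynamics of $\tau \in \mathrm{LNUCA}_{c}(G,k^n)$ and one-sided invertibility of the associated matrix $T \in M_n(D^1(k[G]))$ under the canonical isomorphism of Theorem~\ref{t:intro-iso-ring-first}, where the product is composition of maps. The key lemma I would first establish is that $\tau=\sigma_s$ is \emph{stably injective} if and only if $T$ is left-invertible, i.e. $T'T = I$ for some $T'$. The easy direction (left-invertible $\Rightarrow$ stably injective) holds because a finite-memory left inverse of $\sigma_s$ transports along the orbit closure $\Sigma(s)$ to left inverses of every $\sigma_p$. The converse is where I expect the main difficulty: injectivity of each individual $\sigma_p$ does not by itself supply a single finite-memory inverse, so one must exploit stability across the whole subshift $\Sigma(s)$ together with the shadowing property of finite-memory linear NUCA \cite{phung-shadowing}, \cite{phung-dual-linear-nuca} and a compactness argument on $\Sigma(s)$ to manufacture a \emph{uniform} finite-memory left inverse. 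Granting this lemma, the cross-equivalences follow: for (iv)$\Rightarrow$(ii) a stably injective $\tau$ yields a left-invertible $T$, which stable finiteness upgrades to a two-sided unit, whence $\tau$ is invertible and in particular surjective; for (ii)$\Rightarrow$(iv) a left-invertible $T$ (say $T'T=I$) corresponds to a stably injective $\tau=\sigma_T$, which (ii) forces to be surjective, and since $\tau$ is then bijective with the one-sided inverse $\sigma_{T'}$ already at hand, $\sigma_{T'}$ is automatically its two-sided inverse, so $TT'=I$ and $T$ is a unit. Running this verbatim over all fields (resp.\ all finite fields) gives (i)$\Leftrightarrow$(iii) (resp.\ (ii)$\Leftrightarrow$(iv)).

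It then remains to link the arbitrary-field and finite-field conditions, for which I would prove the purely ring-theoretic reduction (iv)$\Rightarrow$(iii) by a Malcev-type specialization, in the spirit of \cite{phung-weakly}. Given $A,B \in M_n(D^1(k[G]))$ with $AB=I$ but $BA \neq I$, only finitely many elements of $G$ and finitely many scalars of $k$ occur; letting $R_0 \subset k$ be the finitely generated subring they generate, the relations descend to $M_n(D^1(R_0[G]))$ using functoriality of $D^1(-[G])$ in the coefficient ring (the twisting being defined purely by group operations, it commutes with coefficient homomorphisms). A maximal ideal of $R_0$ avoiding the nonzero entry of $BA-I$ then produces a finite residue field $\mathbb{F}$ via the Nullstellensatz for Jacobson rings, over which $\bar A \bar B = I$ but $\bar B \bar A \neq I$, contradicting (iv). Together with the biconditionals above, this closes the equivalence of (i)--(iv).

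Finally, for the dual notions (v) and (vi) I would invoke the linear duality functor for NUCA of \cite{phung-dual-linear-nuca}, which sends $\mathrm{LNUCA}_{c}(G,V)$ to $\mathrm{LNUCA}_{c}(G,V^{*})$, exchanges injectivity with surjectivity and stable injectivity with stable post-surjectivity, and corresponds under Theorem~\ref{t:intro-iso-ring-first} to the canonical anti-isomorphism of $D^1(k[G])$ induced by $g \mapsto g^{-1}$ on $k[G]$. Since stable finiteness is a left--right symmetric property, preserved by anti-isomorphisms and by passage to the opposite ring, this duality identifies (v) with (iii) and (vi) with (iv), splicing the dual surjunctivity conditions into the chain and completing the proof. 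I expect the compactness-plus-shadowing step in the key lemma to be the genuine obstacle; the remaining steps are bookkeeping around the isomorphism and a standard specialization.
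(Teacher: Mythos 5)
Your proposal is correct and its skeleton coincides with the paper's: the same key lemma (stable injectivity of $\tau \in \mathrm{LNUCA}_c(G,V)$ is equivalent to left-invertibility; this is Theorem~\ref{t:general-stably-injective-left-invertible-linear} and Theorem~\ref{t:converse-general-stably-injective-left-invertible-linear}), the same dictionary $\mathrm{LNUCA}_c(G,k^n) \simeq M_n(D^1(k[G]))$ (Theorem~\ref{t:iso-ring-first} with Proposition~\ref{p:iso-ring-second}), the same derivation of (i)$\Leftrightarrow$(iii) and (ii)$\Leftrightarrow$(iv) (Theorem~\ref{t:equivalence-L-1-stable} and Corollary~\ref{c:equivalence-L-1-stable}), and the same use of the duality theory of \cite{phung-dual-linear-nuca} to absorb (v) and (vi). The one place where you genuinely diverge is the bridge between finite fields and arbitrary fields. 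The paper proves (i)$\Leftrightarrow$(ii) dynamically (Theorem~\ref{t:main-reduction-grothendieck}): it converts stable injectivity into finitary local identities $f^+_{\{g\},t(g)} \circ f^+_{gM, s\vert_{gM}} = \pi_{gM^2,\{g\}}$ via left-invertibility, descends these to a finitely generated $\Z$-algebra $R \subset k$, uses the finite-field hypothesis at all closed points (which have finite residue fields), and propagates the resulting two-sided identities back over $k$ by a Jacobson-scheme/equalizer argument. You instead close the loop with a purely ring-theoretic Malcev-type specialization (iv)$\Rightarrow$(iii): finite supports place a putative counterexample $AB=I$, $BA\neq I$ inside $M_n(D^1(R_0[G]))$ for a finitely generated $\Z$-subalgebra $R_0 \subset k$; functoriality of $D^1(-[G])$ in the coefficient ring (the twisted products \eqref{e:def-singular-multiplication} are integral bilinear expressions in the coefficients, so they commute with coefficient homomorphisms) allows reduction modulo a maximal ideal avoiding a nonzero coefficient of $BA-I$; and the Nullstellensatz for Jacobson rings yields a finite residue field, contradicting (iv). Given the dictionary and the key lemma, your route is shorter and avoids the scheme-theoretic machinery; what the paper's route buys is the standalone dynamical reduction Theorem~\ref{t:main-reduction-grothendieck}, valid for each fixed $n$ at the level of NUCA, which is of independent interest.

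Three minor corrections. First, the hard direction of your key lemma is proved in the paper (Lemma~\ref{l:reversible-finite-memory-linear}) by a stationarity (Mittag--Leffler type) argument on descending chains of finite-dimensional subspaces $\pi_{nm}(I_m)$, together with the compactness of $\Sigma(s)$ and the dichotomy $\Sigma(s)=\{gs\} \cup \{c\}$; the shadowing property plays no role there, so you should not rely on it. Second, the duality of \cite{phung-dual-linear-nuca} exchanges surjectivity with \emph{pre-injectivity} (not injectivity), which is exactly what conditions (v) and (vi) require. Third, your claim that this duality corresponds under Theorem~\ref{t:intro-iso-ring-first} to an anti-isomorphism of $D^1(k[G])$ induced by $g \mapsto g^{-1}$ is plausible but unverified and unnecessary: the dynamical equivalences ($\tau$ pre-injective iff $\tau^*$ surjective, $\tau$ stably post-surjective iff $\tau^*$ stably injective) identify (v) with (i) and (vi) with (ii) directly, which is how the paper proceeds.
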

\par 
Here, a group $G$ is \emph{dual $L^1$-surjunctive}, resp. \emph{finitely dual  $L^1$-surjunctive},  if for every finite-dimensional vector space $V$, resp. for every  finite vector space $V$,  every stably post-surjective $\tau  \in \mathrm{LNUCA}_c(G,V)$ is pre-injective. 
\par 
The dual surjunctivity is studied in \cite{kari-post-surjective} where it was shown that every post-surjective CA over a sofic universe and a finite alphabet is also pre-injective. See also \cite{phung-post-surjective} for some extensions.  
As an application of Theorem~\ref{t:main-first-intro}, we obtain the following result which extends \cite[Theorem~B]{phung-tcs} and \cite[Theorem~D]{phung-dual-linear-nuca} to cover the case of initially subamenable group universes (see Section~\ref{s:sofic-groups}) and arbitrary finite-dimensional vector space alphabets:   
\begin{theoremletter}
    \label{t:intro-initially-subamenable} 
    All initially subamenable groups  and 
    all residually finite groups are $L^1$-surjunctive and dual $L^1$-surjunctive.   
\end{theoremletter}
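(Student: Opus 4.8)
The plan is to reduce the assertion to the single dynamical condition~(ii) of Theorem~\ref{t:main-first-intro} and to use that both families of groups are sofic. Since the six properties in Theorem~\ref{t:main-first-intro} are equivalent, it suffices to prove that every residually finite group and every initially subamenable group is \emph{finitely $L^1$-surjunctive}; the remaining conclusions, and in particular dual $L^1$-surjunctivity, then follow at once from the equivalence, so no separate argument for the dual statement is needed. A finite group is both residually finite and amenable, and there $V^G$ is finite dimensional so that injectivity already forces surjectivity by counting dimensions; we may therefore assume $G$ infinite, the finite case being immediate. Fix a finite vector space $V$ and a stably injective $\tau=\sigma_s\in\mathrm{LNUCA}_c(G,V)$ with finite memory $M$, where $s(g)=s_\infty$ for all $g$ outside some finite set $E$.

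First I would isolate the \emph{automaton at infinity}. Because $G$ is infinite, for any net $g_i$ leaving every finite subset of $G$ the translates $g_i s$ converge in the prodiscrete topology to the constant configuration $g\mapsto s_\infty$, so this configuration lies in $\Sigma(s)$; the NUCA it defines is the genuine linear CA $\sigma_{s_\infty}$ with finite memory $M$. Applying stable injectivity at the two points $s$ and $g\mapsto s_\infty$ of $\Sigma(s)$ shows that both $\sigma_s$ and $\sigma_{s_\infty}$ are injective. Now residually finite groups and initially subamenable groups are sofic, and sofic groups are $L$-surjunctive \cite{gromov-esav,csc-sofic-linear}; hence the injective linear CA $\sigma_{s_\infty}$ is surjective, thus bijective. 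As $V$ is finite the full shift $V^G$ is compact, so the inverse of $\sigma_{s_\infty}$ is again a linear CA with finite memory.

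The key reduction is to pass from $\tau$ to $\psi:=\sigma_{s_\infty}^{-1}\circ\sigma_s$. Since $s$ agrees with $s_\infty$ off $E$, the difference $\sigma_s-\sigma_{s_\infty}$ is a linear NUCA whose output is supported in $E$; composing with the finite-memory map $\sigma_{s_\infty}^{-1}$ enlarges this support only to a finite set $E'$, so that $\psi=\mathrm{Id}+\phi$ with $\phi$ a finite-memory linear NUCA whose output is supported in $E'$. Being the composite of the injective $\sigma_s$ with the bijection $\sigma_{s_\infty}^{-1}$, the map $\psi$ is injective. With respect to the decomposition $V^G=V^{E'}\oplus V^{G\setminus E'}$, the map $\psi$ is upper triangular, with diagonal blocks the identity on $V^{G\setminus E'}$ and an endomorphism $\psi_0$ of the finite-dimensional space $V^{E'}$; indeed a configuration supported in $E'$ is sent by $\psi$ to a configuration again supported in $E'$, while $\psi$ acts as the identity on coordinates outside $E'$. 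Injectivity of $\psi$ then forces $\psi_0$ to be injective, hence bijective, so $\psi$ is bijective and in particular surjective. Therefore $\sigma_s=\sigma_{s_\infty}\circ\psi$ is a composite of surjections, completing the proof.

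The main obstacle is the last reduction: one must check that peeling off the invertible automaton at infinity genuinely turns $\tau$ into an identity-plus-finite-perturbation, i.e.\ that $\sigma_{s_\infty}^{-1}\circ(\sigma_s-\sigma_{s_\infty})$ keeps finite output support and that the block-triangular structure above holds verbatim; everything else is delivered by Theorem~\ref{t:main-first-intro} together with the known $L$-surjunctivity of sofic groups, and it is precisely the soficity of residually finite and of initially subamenable groups that lets a single argument cover both cases. For residually finite $G$ one may alternatively verify condition~(iv) by hand, embedding $D^1(k[G])$ into the product of the finite-dimensional---hence stably finite---algebras $D^1(k[G/N])$ attached to the finite quotients $G/N$ and checking that the twisted multiplication is functorial in $G$; this gives a second, purely ring-theoretic route in that case.
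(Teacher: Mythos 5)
Your proof is correct, but it takes a genuinely different route from the paper's. The paper makes the same first move (reduce everything to finite $L^1$-surjunctivity via Theorem~\ref{t:main-first-intro}), but then argues separately for the two classes: for initially subamenable $G$, it transports the local left-inverse identities $f^+_{\{g\},t(g)}\circ f^+_{gM,s\vert_{gM}}=\pi_{gM^2,\{g\}}$, valid for all $g$ in a large finite set $E\supsetneq M^4$, through a partial embedding $\varphi\colon E\to H$ into an amenable group $H$, applies the amenable-universe theorem \cite[Theorem~B.(i)]{phung-tcs} over $H$ to get surjectivity there, and pulls the resulting right-inverse identities back to $G$ (this is Theorem~\ref{t:application-finitely-l-1}); for residually finite $G$ it cites \cite[Theorem~B.(ii)]{phung-tcs} directly. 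You instead observe that both classes are sofic and lean on the $L$-surjunctivity of sofic groups for genuine linear CA \cite{gromov-esav}, \cite{csc-sofic-linear}: stable injectivity makes the limit CA $\sigma_{s_\infty}$ injective, hence bijective, and compactness of $V^G$ (this is exactly where finiteness of $V$ enters) gives it a finite-memory CA inverse; then $\sigma_s=\sigma_{s_\infty}\circ\psi$ with $\psi=\Id+\phi$, where $\phi=\sigma_{s_\infty}^{-1}\circ(\sigma_s-\sigma_{s_\infty})$ is linear with output supported in the finite set $E'=EN^{-1}$ ($N$ a memory of $\sigma_{s_\infty}^{-1}$), and your block-triangular argument correctly reduces bijectivity of $\psi$ to that of an injective endomorphism of the finite-dimensional space of configurations supported in $E'$. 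Your route is shorter, uniform in the two classes, and proves strictly more: every infinite group whose injective linear CA over finite vector space alphabets are surjective (so every sofic group) is finitely $L^1$-surjunctive, hence $L^1$-surjunctive and dual $L^1$-surjunctive by Theorem~\ref{t:main-first-intro}. What the paper's argument buys instead is independence from the soficity/$L$-surjunctivity machinery: its finitary transfer needs only the amenable-universe NUCA theorem, and, as the paper remarks, the same argument shows that \cite[Theorem~B.(i)]{phung-tcs} itself extends to initially subamenable universes. Your sketched ring-theoretic alternative for residually finite $G$ (embedding $D^1(k[G])$ into $\prod_N D^1(k[G/N])$, using that stable finiteness passes to subrings and products) is also viable---the coefficientwise pushforward does respect the twisted products---but as written it remains an outline rather than a proof.
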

\par 
We deduce immediately from Theorem~\ref{t:main-first-intro} and Theorem~\ref{t:intro-initially-subamenable} the following result on the stable finiteness of the twisted group rings:  
\begin{corollary}
    Let $G$  be a residually finite group or an initially subamenable group. Then for every field $k$, the ring $D^1(k[G])$ is stably finite. \qed 
\end{corollary}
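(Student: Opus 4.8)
The plan is to read the corollary off directly from Theorem~\ref{t:intro-initially-subamenable} and Theorem~\ref{t:main-first-intro}, splitting into two cases according to whether $G$ is infinite or finite. The finite case needs separate treatment, since Theorem~\ref{t:main-first-intro} is stated only for infinite groups whereas the hypothesis ``residually finite or initially subamenable'' also covers finite $G$ (every finite group is both).

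For the main case, assume $G$ is infinite. By Theorem~\ref{t:intro-initially-subamenable}, every residually finite group and every initially subamenable group is $L^1$-surjunctive, so $G$ satisfies condition (i) of Theorem~\ref{t:main-first-intro}. The equivalence (i) $\Leftrightarrow$ (iii) of that theorem then gives precisely that $D^1(k[G])$ is stably finite for every field $k$, which is the entire content of the corollary in this case.

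For the remaining case, assume $G$ is finite. Then $k[G]$ is a finite-dimensional $k$-algebra, and the iterated group ring $(k[G])[G]$ is finite-dimensional over $k[G]$, hence over $k$; consequently the underlying additive group $k[G] \times (k[G])[G]$ of $D^1(k[G])$ is a finite-dimensional $k$-vector space. Because the twisted multiplication of Definition~\ref{d:d-1-k-g} is $k$-bilinear, $D^1(k[G])$ is a finite-dimensional $k$-algebra. Such algebras are Artinian and therefore stably finite: in a finite-dimensional $k$-algebra $A$, a relation $ab = 1$ forces the left-multiplication map $L_a$ to be a surjective, hence bijective, $k$-linear endomorphism, so $a$ is a two-sided unit. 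Applying this to $M_n(D^1(k[G]))$, which is again finite-dimensional over $k$, yields stable finiteness.

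The only genuine obstacle is the bookkeeping around the finite case: one must confirm that the twisting in the second component does not destroy finite-dimensionality --- which it does not, since it merely redistributes coefficients among the finitely many elements of $G$ --- and then invoke the standard fact that finite-dimensional algebras are Dedekind finite. Combining the two cases completes the proof.
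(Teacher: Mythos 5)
Your proof is correct and follows essentially the same route as the paper: the corollary is read off by combining Theorem~\ref{t:intro-initially-subamenable} (which gives $L^1$-surjunctivity of residually finite and initially subamenable groups) with the equivalence (i)$\iff$(iii) of Theorem~\ref{t:main-first-intro}. The paper deduces the corollary ``immediately'' from these two theorems without comment on finite groups; your separate treatment of finite $G$ --- observing that $D^1(k[G])$ is then a finite-dimensional $k$-algebra, hence Dedekind finite, and likewise for $M_n(D^1(k[G]))$ --- is a legitimate and worthwhile supplement, since Theorem~\ref{t:main-first-intro} is stated only for infinite groups while the corollary's hypotheses do admit finite $G$.
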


\par 
The paper is organized as follows. We recall   in Section~\ref{s:sofic-groups} the definition of intially subamenable groups and residually finite groups. Section~\ref{s:induced-local-map} collects the construction of various induced local maps of NUCA. Then we establish the equivalence of the left-invertibility and the stable injectivity of elements of the class $\mathrm{LNUCA}_c(G,V)$ where $V$ is any finite-dimensional vector space (Theorem~\ref{t:general-stably-injective-left-invertible-linear}, Theorem~\ref{t:converse-general-stably-injective-left-invertible-linear}). The construction of the twisted group ring $D^1(k[G])$ is given in Section~\ref{s:4}. We then present the proof of Theorem~\ref{t:intro-iso-ring-first} as a consequence of Theorem~\ref{t:iso-ring-first} and Proposition~\ref{p:iso-ring-second} respectively in Section~\ref{s:5} and Section~\ref{s:6}. The dynamical characterization of the direct finiteness of the ring $M_n(D^1(k[G]))$ in terms of the direct finitness of $\mathrm{LNUCA}_c(G,k^n)$. 
The proof of the main result Theorem~\ref{t:main-first-intro} is contained in Section~\ref{s:main-general}. Finally, in Section~\ref{s:9}, we prove Theorem~\ref{t:intro-initially-subamenable} as an application of Theorem~\ref{t:main-first-intro}.

\section{Initially subamenable groups and residually finite groups}
\label{s:sofic-groups}

\subsection{Amenable groups} Amenable groups were defined by von Neumann \cite{neumann-amenable}.  A group $G$ is \emph{amenable} if  the Følner's condition \cite{folner} is satisfied: 
for every $\varepsilon >0$ and $T \subset G$ finite, there exists $F \subset G$ finite such that $\vert TF \vert \leq (1+\varepsilon)\vert F \vert$. Finitely generated groups of subexponential growth and  solvable groups are amenable. However, all groups containing a subgroup isomorphic to a free group of rank 2 are non-amenable. See e.g. \cite{stan-amenable} for some more details. 
The celebrated Moore and Myhill Garden of Eden theorem  \cite{moore}, \cite{myhill} was generalized to characterize amenable groups (cf.~\cite{ceccherini}, \cite{bartholdi-kielak}, \cite{cscp-alg-goe}, \cite{phung-2020},  \cite{phung-post-surjective}) and asserts that a CA with finite alphabet over an amenable group universe is surjective if and only if it is pre-injective. 
\par 
More generally, we say that  a group $G$ is \emph{initially subamenable} if for every
  $E \subset G$ finite,  there exist an amenable group $H$ and an injective map $\varphi \colon E \to  H$  such that $\varphi (gh)= \varphi(g) \varphi(h)$ for all $g,h \in E$ with $gh \in E$. Initially subamenable groups are sofic but the converse does not hold \cite{cornulier-example}.  
Note also that 
finitely presented initially subamenable groups 
are residually amenable but there exist initially subamenable groups which are not residually amenable \cite{elek-example}.

\subsection{Residually finite groups} 
We say that a group $G$ is \emph{residually finite} if for every finite subset $F \subset G$, there exists a finite group $H$ and a surjective group homomorphism $\varphi \colon G \to H$ such that the restriction   $\varphi\vert_F \colon F \to H$ is injective. All finitely generated abelian groups and 
more generally all finitely generated linear are   residually finite. Note that  free groups are non-amenable but residually finite.

\section{Induced local maps of NUCA} 
\label{s:induced-local-map}
To fix the notation, for all sets $E \subset F$ and $\Lambda \subset A^F$, we denote $\Lambda_E=\{ x\vert_{E}\colon x \in \Lambda \} \subset A^E$. 
Let $G$ be a group and let $A$ be a set. For every subset $E\subset G$, $g \in G$,  and $x \in A^E$ we define $gx \in A^{gE}$ by setting $gx(gh)=x(h)$ for all $h \in E$. In particular,  $gA^E= \{gx \colon x \in A^E\}=A^{gE}$. 
\par 
Let $M$  be a subset of a group $G$. Let $A$ be a set and let $S=A^{A^M}$. For every finite subset $E \subset G$  and $w \in S^{E}$,  
we define a map  $f_{E,w}^+ \colon A^{E M} \to A^{E}$ for every $x \in A^{EM}$ and $g \in E$ by (see e.g. \cite[Lemma~3.2]{cscp-alg-goe}, \cite[Proposition~3.5]{phung-2020}, \cite[Secion~2.2]{phung-embedding} for the case of CA):  
\begin{align}
\label{e:induced-local-maps} 
    f_{E,w}^+(x)(g) & = w(g)((g^{-1}x)\vert_M). 
\end{align}
\par 
In the above formula, note that  $g^{-1}x \in A^{g^{-1}EM}$ and $M \subset g^{-1}EM$ since $1_G \in g^{-1}E$ for $g \in E$. Therefore, the map   $f_{E,w}^+ \colon A^{E M} \to A^{E}$ is well defined. 
\par
Consequently, for every $s \in S^G$, we have a well-defined induced local map $f_{E, s\vert_E}^+ \colon A^{E M} \to A^{E}$ for every finite subset $E \subset G$ which satisfies: 
\begin{equation}
\label{e:induced-local-maps-general} 
    \sigma_s(x)(g) =  f_{E, s\vert_E}^+(x\vert_{EM})(g)
\end{equation}
for every $x \in A^G$ and $g \in E$. Equivalently, we have for all $x \in A^G$ that: 
\begin{equation}
\label{e:induced-local-maps-proof} 
    \sigma_s(x)\vert_E =  f_{E, s\vert_E}^+(x\vert_{EM}). 
\end{equation}

\section{Left-invertibility of stably injective linear NUCA} 
\label{s:4}

For the proof of the main result of the section Theorem~\ref{t:general-stably-injective-left-invertible-linear}, 
we shall need the following useful technical lemma.  

\begin{lemma}
\label{l:reversible-finite-memory-linear}
Let $G$ be a finitely generated infinite group and let $V$ be a finite-dimensional vector space. 
Let $\tau \in \mathrm{LNUCA}_c(G,V)$ be a stably  injective linear NUCA and 
let $\Gamma\coloneqq \tau(V^G)$. 
Then there exists a finite subset $N\subset G$ such that the following condition holds: 
\begin{enumerate}
\item[$\mathrm{(C)}$]  
 \textit{for any} $d\in \Gamma$ \textit{and} $g \in G$, the element  
 $\tau^{-1}(d)(g)\in V$ \textit{depends only on the restriction} $d \vert_{gN}$. 
\end{enumerate}
\end{lemma}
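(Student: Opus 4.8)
\emph{Plan.} The plan is to reduce condition $\mathrm{(C)}$, via linearity, to a vanishing statement about kernels of the induced local maps of \eqref{e:induced-local-maps-proof}, and then to upgrade pointwise vanishing to a \emph{uniform} radius by a compactness argument on the orbit closure $\Sigma(s)$ combined with a Mittag-Leffler inverse-limit argument that is fed by stable injectivity. Fix a finite symmetric generating set of $G$ with associated word-length balls $B_n$, and write $\tau = \sigma_s$ with finite memory $M$ and asymptotically constant $s$; enlarging $M$ if necessary we assume $1_G \in M$, so that the value set $S' \coloneqq s(G) \subset \LL(V^M, V)$ is finite. Since $\tau$ is linear and injective (as $s \in \Sigma(s)$), for $d,d' \in \Gamma$ with $d\vert_{gB_n} = d'\vert_{gB_n}$ the difference $z \coloneqq \tau^{-1}(d) - \tau^{-1}(d')$ satisfies $\tau(z)\vert_{gB_n} = 0$; running over all such pairs, $\mathrm{(C)}$ with $N = B_n$ is equivalent to the implication $\tau(z)\vert_{gB_n} = 0 \Rightarrow z(g) = 0$ for all $z \in V^G$ and $g \in G$. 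I therefore set $W_{g,n} \coloneqq \{\, z(g) : z \in V^G,\ \sigma_s(z)\vert_{gB_n} = 0 \,\} = \mathrm{ev}_g\big(\ker f^{+}_{gB_n,\, s\vert_{gB_n}}\big) \subseteq V$, so that $\mathrm{(C)}$ for $N = B_n$ is exactly the assertion $W_{g,n} = 0$ for all $g$. Restricting local solutions shows that $(W_{g,n})_n$ is a decreasing chain of subspaces of the finite-dimensional space $V$, so it suffices to produce a single $n$ with $W_{g,n} = 0$ simultaneously for every $g$.

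To obtain uniformity in $g$ I recenter. The identity $\sigma_{g_0 s} = g_0 \circ \sigma_s \circ g_0^{-1}$, immediate from Definition~\ref{d:most-general-def-asyn-ca}, gives $W_{g,n} = W_{g^{-1}s,\,n}$ after translating the point $g$ to $1_G$, where for $q \in S^G$ I write $W_{q,n} \coloneqq \mathrm{ev}_{1_G}\big(\ker f^{+}_{B_n,\, q\vert_{B_n}}\big)$. Crucially, $W_{q,n}$ depends only on the finite pattern $q\vert_{B_n}$; and since $S' = s(G)$ is finite, the orbit $\{g^{-1}s : g \in G\}$ lies in the compact space $(S')^G$, whose closure is $\Sigma(s)$. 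Thus it is enough to find $n$ with $W_{q,n} = 0$ for all $q \in \Sigma(s)$.

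For a fixed $q \in \Sigma(s)$, suppose some nonzero $v$ lies in $\bigcap_n W_{q,n}$. Then for each $n$ the set $Z_n \coloneqq \{\, u \in V^{B_nM} : f^{+}_{B_n,\, q\vert_{B_n}}(u) = 0,\ u(1_G) = v \,\}$ is a nonempty coset of a subspace of the finite-dimensional space $V^{B_nM}$, and restriction makes $(Z_n)_n$ an inverse system of such cosets. By the Mittag-Leffler property the images stabilize and $\varprojlim_n Z_n \neq \varnothing$, producing $u \in V^G$ with $\sigma_q(u) = 0$ and $u(1_G) = v \neq 0$; this contradicts the injectivity of $\sigma_q$, which holds precisely because $\tau$ is stably injective. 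Hence $\bigcap_n W_{q,n} = 0$, and by finite-dimensionality $W_{q, n(q)} = 0$ for some $n(q)$. As $W_{q,n}$ depends only on $q\vert_{B_n}$, the cylinder $\{\, q' \in \Sigma(s) : q'\vert_{B_{n(q)}} = q\vert_{B_{n(q)}} \,\}$ is an open neighborhood of $q$ on which $W_{\,\cdot\,, n(q)}$ vanishes; a finite subcover of the compact set $\Sigma(s)$ and the choice $N = B_{n^*}$, with $n^*$ the maximum of the finitely many radii, then give (by monotonicity) $W_{q,n^*} = 0$ for all $q \in \Sigma(s)$, whence $W_{g,n^*} = W_{g^{-1}s,\,n^*} = 0$ for every $g \in G$.

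I expect the uniformity step to be the main obstacle: passing from the pointwise vanishing $\bigcap_n W_{g,n} = 0$ to one radius $N$ valid for all $g$ at once. This is exactly where \emph{stable} injectivity, rather than injectivity of $\sigma_s$ alone, is essential, since the compactness argument forces control of $\sigma_q$ at the limit points $q \in \Sigma(s)$ of the orbit and not merely on the orbit itself; and it is where the finiteness of the value set $S'$ (a consequence of asymptotic constancy together with $\dim V < \infty$) is needed to make $\Sigma(s)$ compact.
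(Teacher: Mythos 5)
Your proof is correct, and it organizes the argument differently from the paper. The paper proceeds by contradiction: assuming no finite $N$ works, it extracts a sequence of witnesses $(d_n, g_n)$, translates them to the origin, uses compactness of $\Sigma(s)$ to pass to a convergent subsequence $g_n^{-1}s \to t$, and then — invoking the explicit description $\Sigma(s) = \{gs : g \in G\} \cup \{c\}$ from \cite[Lemma~8.1]{phung-tcs} — splits into two cases ($t$ in the orbit, or $t$ the constant configuration $c$), running a Mittag--Leffler argument on the kernels $I_n = \Ker(f^+_{E_n, s\vert_{E_n}})$ in each case to produce a nonzero element of $\Ker \sigma_s$ or $\Ker \sigma_c$. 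You instead argue directly: you attach to \emph{every} $q \in \Sigma(s)$ the decreasing chain of subspaces $W_{q,n} \subset V$, kill each chain by the same Mittag--Leffler-plus-stable-injectivity mechanism, and then obtain uniformity in $g$ from the local constancy of $q \mapsto W_{q,n}$ on cylinders together with a finite subcover of the compact $\Sigma(s)$. The core engine (finite value set $\Rightarrow$ compact $\Sigma(s)$; stabilization of decreasing chains in finite dimensions; injectivity of $\sigma_q$ for all $q \in \Sigma(s)$) is identical, but your route avoids both the case analysis and the citation of \cite[Lemma~8.1]{phung-tcs}, so it applies verbatim to any configuration of local defining maps taking finitely many values (not just asymptotically constant ones), and replaces the sequence/subsequence bookkeeping by a cleaner covering argument; the paper's version, in exchange, only ever has to handle two explicit limit configurations. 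One cosmetic slip: finiteness of the value set $S' = s(G)$ follows from asymptotic constancy alone; $\dim V < \infty$ is not needed for that, but rather for the stabilization of the chains $W_{q,n}$ and of the images in the Mittag--Leffler step.
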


\begin{proof}
Since $\tau$ is a linear NUCA with finite memory, there exists a finite subset $M \subset G$ and $s \in S^G$ where $S= \LL(V^M, V)$ such that $\tau= \sigma_s$. By hypothesis, $s$ is asymptotic to a constant configuration $c \in S^G$. Up to enlarging $M$, we can also suppose that $s\vert_{G \setminus M}= c\vert_{G \setminus M}$ and that $1_G \in M$. 
Since the group $G$ is finitely generated thus countable, it admits an increasing sequence of finite subsets 
$M=E_0 \subset \dots \subset E_n \dots$ such that $G=\cup_{n\in \N} E_n$. 
\par
Suppose on the contrary that there does not exist a finite subset $N\subset G$ which satisfies condition  $\mathrm{(C)}$. 
Then by linearity, there exist  for each $n \in \N$ a configuration $d_n \in \Gamma$ and an element $g_n \in G$ such that for $c_n=\tau^{-1}(d_n)$ (which is well-defined since $\tau$ is injective), we have: 
\begin{equation*}
d_n\vert_{g_nE_n}=0^{g_nE_n} \quad \text{and} \quad c_n(g_n)\neq 0.  
\end{equation*}
 \par 
Consequently, by letting $x_n=g_n^{-1}c_n$ and $y_n= g_n^{-1}d_n$, we infer from \cite[Lemma~5.1]{phung-tcs} that $\sigma_{g_n^{-1}s}(x_n)=y_n$ and: 
\begin{equation*}
y_n\vert_{E_n}=0^{E_n} \quad \text{and} \quad x_n(1_G)\neq 0.  
\end{equation*}
\par 
Since $s$ is asymptotic to a constant configuration $c \in S^G$ by hypothesis, the set $T=\{s(g) \colon g \in G\}$ is actually a finite subset of $S= \LL(V^M, V)$. It follows that $\Sigma(s) \subset T^G$ is a compact subspace. Therefore, up to restricting to a subsequence, we cans suppose without loss of generality that the sequence $(g_n^{-1}s)_{n \in \N}$ converges to a configuration $t \in T^G \subset S^G$ with respect to the prodiscrete topology. 
 \par 
By \cite[Lemma~8.1]{phung-tcs}, we know that $\Sigma(s) = \{gs \colon g \in G\} \cup \{c\}$. Note that if $s$ is constant then the lemma results from \cite{csc-sofic-linear}. Hence, we can suppose in the sequel that $s$ is not a constant configuration. In particular, $T$ and $\Sigma(s)$ are not singleton. We distinguish two cases according to whether $t = c$ or not. 
\par 
\textbf{Case 1:} $t=gs$ for some $g \in G$. Then since $G$ is infinite and $s$ is asymptotic but not equal to $c$, we can, up to restricting to a subsequence again, assume without loss of generality that $g_n^{-1}=g$ for all $n \in \N$. Up to replacing $s$ by $gs$, we can also suppose that $g=1_G$ so that $\sigma_s(x_n)=y_n$ for all $n \in \N$. 
For each $n\in \N$, consider the following linear subspace of $V^{E_nM}$: 
\[
I_n\coloneqq  \Ker(f^+_{E_n, s\vert_{E_n}}) \subset V^{E_nM}.
\]
\par 
Observe that $x_n\vert_{E_nM} \in I_n \setminus \{0^{E_nM} \}$. 
Note also that for all $n \leq m \leq k$, the projection 
$p_{nm} \colon A^{E_m} \to A^{E_n}$ induces a linear map 
$\pi_{nm} \colon I_m \to I_n$  and $\pi_{nk}(I_{k}) \subset \pi_{nm}(I_m)$. Hence, for each $n\in \N$, we obtain a decreasing sequence 
of linear subspaces $(\pi_{nm}(I_m))_{m \geq n}$ of $I_n$. 
Hence, $(\pi_{nm}(I_m))_{m \geq n}$ is stationary and 
there exists a linear subspace $J_n \subset I_n\subset A^{E_nM}$ 
such that $\pi_{nm}(I_m)=J_n$ for all $m$ large enough. 
\par
Observe that  $\pi_{nm}(J_m) \subset J_n$ for all $m \geq n$. 
We claim that the restriction  linear map 
$q_{nm} \colon J_m  \to J_n$  is surjective for all $m\geq n$. 
Indeed, let $y\in J_n$ and let $k \geq m$ be sufficiently large  
such that $q_{n k}(I_k)=J_n$ and $q_{m k}(I_k)=J_m$. 
Thus, $q_{nk}(x)=y$ for some  $x\in I_k$. 
As $q_{nk}= q_{nm} \circ q_{mk}$, we have $q_{nm}(y')=y$ where $y'=q_{mk}(x) \in J_m$. The claim is proved.
\par 
We choose $k\in \N$ large enough  such that $\pi_{0k}(I_k)=J_0$. Let $z_0=\pi_{0k}(x_k) \in J_0$ then $z_0(1_G) \neq 0$. 
We define by induction a sequence $(z_n)_{n \in \N}$ where 
$z_n \in J_n$ for all $n\in \N$ as follows. 
Given $z_n \in J_n$ for some $n\in \N$, there exists 
by the surjectivity of the map $q_{n,n+1}$ an element 
\[
z_{n+1}\in q^{-1}_{n,n+1}(z_n) \subset J_{n+1}\subset A^{E_{n+1}M}.
\] 
\par 
We thus obtain a configuration $c \in V^G$ defined by $z\vert_{E_nM}=z_n$ for all $n \in \N$. Since $G=\cup_{n \in \N} E_n M$, the configuration $z$ is well-defined.  
\par
By construction, we have  for all $n \in \N$ that: 
\[
\tau(z)\vert_{E_n}=f^+{E_n, s\vert_{E_n}}(z\vert_{E_nM})=f^+{E_n, s\vert_{E_n}}(z_n)=0^{E_n}. 
\]
\par 
Therefore, $\tau(z)= 0^{G}$ but $z(1_G)\neq 0$ which then  contradicts the injectivity of the linear NUCA $\tau$. 
\par 
\textbf{Case 2:}   $t = c$. Then since $\lim_{n \to \infty} g_n^{-1}s=t$ and $s \neq c$, we deduce immediately that $g_n$ escapes finite groups when $n \to \infty$, i.e., for every finite subset $E \subset G$, there exists $N \in \N$ such that $g_n \notin E$ for all $n \geq N$. Consequently, by restricting to a suitable subsequence, we can suppose without loss of generality that $g_nE_nM \cap M = \varnothing$ for all $n \in \N$. As $s\vert_{G \setminus M}= c\vert_{G \setminus M}$, it follows that $(g_n^{-1}s)\vert_{E_nM} = c\vert_{E_nM}$ for all $ n \in \N$. 
Since $c$ is constant, we deduce that $\sigma_c(x_n)\vert_{E_n}=0^{E_n}$  and $x_n(1_G)\neq 0$. We infer from the stable injectivity of $\sigma_s$ that $\sigma_c$ is injective. Therefore, a similar argument as in \textbf{Case 1} applied for $\sigma_c$ and the sequence $(x_n)_{n \in \N}$ leads to a contradiction. 
\par 
Consequently, there must exist a finite subset $N \subset G$ which satisfies condition (C) and the proof is thus complete. 
\end{proof}

\par 
Our next results Theorem~\ref{t:general-stably-injective-left-invertible-linear} and Theorem~\ref{t:converse-general-stably-injective-left-invertible-linear} extend the results \cite[Theorem~10.1]{phung-dual-linear-nuca} and \cite[Theorem~7.1]{phung-tcs} for NUCA over finite alphabet to the class $\mathrm{LNUCA}_c$ over an arbitrary finite-dimensional vector space. 

\begin{theorem}
\label{t:general-stably-injective-left-invertible-linear} 
Let $G$ be a  group and let $V$ be a finite-dimensional vector space. 
Let $\tau\in \mathrm{LNUCA}_c(G,V)$ be a stably  injective linear NUCA.  Then $\tau$ is left-invertible, i.e., there exists  $\sigma \in \mathrm{LNUCA}_c(G,V)$  such that $\sigma \circ \tau= \Id$.  
\end{theorem}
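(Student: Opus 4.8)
The plan is to build the left inverse directly from the local reversibility encoded in Lemma~\ref{l:reversible-finite-memory-linear}, after first reducing to the case where $G$ is finitely generated and infinite, which is the hypothesis under which that lemma is available. For the reduction, write $\tau = \sigma_s$ with finite memory $M \ni 1_G$ and $s|_{G \setminus E} = c|_{G \setminus E}$ for a constant configuration $c$ and a finite set $E$, and set $H = \langle M \cup E\rangle$. Since $\sigma_s(x)(g)$ depends only on $x|_{gM}$ and $gM \subset gH$ for $g$ in a left coset $gH$, the full shift splits as $V^G \cong \prod_{aH \in G/H} V^{aH}$ and $\sigma_s$ acts coset-wise; on the coset $aH$ the induced map is $\sigma_{s_a}$ over $H$ with $s_a = (a^{-1}s)|_H$, which is again asymptotically constant (equal to $c$ off the finite set $a^{-1}E \cap H$) and stably injective, because each $p \in \Sigma(s_a)$ is the $H$-restriction of an element of $\Sigma(s)$ and injectivity passes to the coset factors. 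If $H$ is finite each $V^{aH}$ is finite-dimensional, so the injective coset map is bijective with a finite-memory linear inverse; if $H$ is infinite I apply the finitely generated case treated next. In either case only the finitely many cosets meeting $E$ carry a non-constant rule while the rest all carry $\sigma_c|_{V^H}$ with one common left inverse, so reassembling the coset inverses yields a single $\sigma \in \mathrm{LNUCA}_c(G,V)$.

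For $G$ finitely generated and infinite, Lemma~\ref{l:reversible-finite-memory-linear} supplies a finite $N$ such that for $d \in \Gamma = \tau(V^G)$ the value $\tau^{-1}(d)(g)$ depends only on $d|_{gN}$. By linearity of $\tau^{-1}$ this yields, for each $g$, a \emph{linear} map $v_g$ on the subspace $\{(g^{-1}d)|_N : d \in \Gamma\} \subset V^N$ with $\tau^{-1}(d)(g) = v_g((g^{-1}d)|_N)$. Extending each $v_g$ to some $u(g) \in \LL(V^N, V)$ and putting $\sigma = \sigma_u$ gives $\sigma_u(d)(g) = \tau^{-1}(d)(g)$ for all $d \in \Gamma$ and all $g$, hence $\sigma_u \circ \tau = \Id$. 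The remaining and decisive point is that $u$ can be chosen asymptotically constant, so that indeed $\sigma \in \mathrm{LNUCA}_c(G,V)$.

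I would settle this by comparison with the constant rule rather than by a direct translation-invariance argument, which is where the main obstacle lies. Since $c \in \Sigma(s)$, stable injectivity makes the linear CA $\sigma_c$ injective, and the constant case of the lemma (from \cite{csc-sofic-linear}) provides a finite $N_c$ and a \emph{single} rule $u_\infty \in \LL(V^{N_c}, V)$ whose induced linear CA $\sigma_0$ satisfies $\sigma_0 \circ \sigma_c = \Id$ on all of $V^G$. For $g$ outside the finite set $E N_c^{-1}$ one has $gN_c \cap E = \varnothing$, so $s(h) = c$ for every $h \in gN_c$; hence for any $x$ with $\tau(x) = d$ we get $d|_{gN_c} = \sigma_c(x)|_{gN_c}$ and therefore $u_\infty((g^{-1}d)|_{N_c}) = \sigma_0(\sigma_c(x))(g) = x(g) = \tau^{-1}(d)(g)$. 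Taking $N \supseteq N_c$ and defining $u(g) = u_\infty$ (precomposed with the projection $V^N \to V^{N_c}$) for $g \notin E N_c^{-1}$, while letting $u(g)$ be an arbitrary linear extension of $v_g$ on the finite set $E N_c^{-1}$, produces a left inverse with finite memory and constant rule off a finite set. The crux is precisely this localization: one must verify that far from the perturbation $E$ the inverse of $\tau$ is computed by exactly the constant inverse rule of $\sigma_c$, which the identity $d|_{gN_c} = \sigma_c(x)|_{gN_c}$ secures; the coset bookkeeping in the reduction is routine by comparison.
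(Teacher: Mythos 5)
Your proposal is correct, and its skeleton coincides with the paper's: the same coset decomposition over the subgroup generated by the memory and the perturbation set reduces everything to the finitely generated infinite case, and there Lemma~\ref{l:reversible-finite-memory-linear} produces the locally determined linear maps $v_g$ (the paper's $\varphi_g$), which are extended to a configuration of local defining maps of the left inverse. The genuine difference is how the decisive point --- asymptotic constancy of that configuration --- is secured. The paper notes that for $g \notin MM^{-1}$ the subspaces $W = \phi_g^{-1}\bigl(\tau(V^G)_{gM}\bigr)$ of $V^M$ all coincide, fixes one decomposition $V^M = W \oplus U$, extends each $\varphi_g$ by zero on $U$, and then asserts it is ``clear'' that the resulting configuration is asymptotically constant; this still presupposes that the restricted maps $\varphi_g \circ \phi_g\vert_W$ agree for all distant $g$, a verification the paper leaves implicit (it is true, e.g.\ by translating preimages: if $gM^2 \cap M = \varnothing$, $g'M^2 \cap M = \varnothing$ and $\tau(x)\vert_{gM} = \phi_g(w)$, one checks $\tau(g'g^{-1}x)\vert_{g'M} = \phi_{g'}(w)$ while $\tau^{-1}\bigl(\tau(g'g^{-1}x)\bigr)(g') = x(g)$). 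You instead prove exactly this invariance by comparison with the constant CA: stable injectivity makes $\sigma_c$ injective, \cite{csc-sofic-linear} yields a CA left inverse with a single rule $u_\infty$, and your identity $\tau^{-1}(d)(g) = u_\infty\bigl((g^{-1}d)\vert_{N_c}\bigr)$ for $g \notin EN_c^{-1}$ pins down the far-field rule explicitly. This buys two things: it fills in the step the paper glosses over, and it makes the coset reassembly frictionless, since the far-field rule on the distinguished coset is by construction the very same $u_\infty$ serving as the common inverse rule on all other cosets. The paper's route is shorter on the page; yours is the more self-contained argument for the same theorem.
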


\begin{proof}
As the linear NUCA $\tau$ has  finite memory, we can find a finite subset $M \subset G$ and $s \in S^G$ where $S= \LL(V^M, V)$ such that $\tau= \sigma_s$. By hypothesis, the configuration $s$ is asymptotic to a constant configuration $c \in S^G$. Hence,  we can, up to enlarging $M$, suppose that $s\vert_{G \setminus M}= c\vert_{G \setminus M}$ and that $1_G \in M$. 
\par 
Assume first that $G$ is a finitely generated infinite group. Then we infer from 
Lemma~\ref{l:reversible-finite-memory-linear} that there exists a finite subset $N \subset G$ such that for any $d\in \tau(V^G)$  and $g \in G$, the element  
 $\tau^{-1}(d)(g)\in V$  depends only on the restriction $d \vert_{gN}$. Up to enlarging $M$ and $N$, we can clearly suppose that $M=N$. Consequently, for each $g \in G$, we have a well-defined map $\varphi_g\colon \tau(V^G)_{gM} \to V$ given by $d\vert_{gM} \mapsto \tau^{-1}(d)(g)$ for every $d \in V^G$. \par 
 Since $\tau$ is linear and $\tau(V^G)_{gM}$ is a linear subspace of $V^{gM}$, it follows that $\varphi_g$ is also a linear map and we can extend $\varphi_g$ to a linear map $\tilde{\varphi}_g \colon V^{gM} \to V$ which coincides with $\varphi_g$ on $\tau(V^G)_{gM}$.  
 Let $\phi_g \colon V^M \to V^{gM}$ be the canonical automorphism induced by the bijection $M \simeq gM$, $h \mapsto gh$. 
 Let us define an configuration $t \in S^G$ where $S= \LL(V^M, V)$ by setting  $t(g)= \tilde{\varphi}_g \circ \phi_g \colon V^M \to M$ for every $g \in G$. 
 It is immediate from the construction that for every $c \in V^G$, $g \in G$, and $d = \tau(c) \in \tau(V^G)$, we have 
 \begin{equation*}
     \sigma_t(\sigma_s(c))(g)= \sigma_t(d)(g)=t(g)((g^{-1}d)\vert_{M})= \tau^{-1}(d)(g)=c(g). 
 \end{equation*}
 \par 
 Therefore, $\sigma_t \circ \sigma_s = \Id$ and we conclude that $\tau=\sigma_s$ is left-invertible. In fact, since $s\vert_{G \setminus M}= c\vert_{G \setminus M}$, the linear spaces $W=\phi_g^{-1}(\tau(V^G)_{gM}) =\phi^{-1}( f^+{gM,s\vert_{gM}}(V^{gM^2}))$ coincide as linear subspaces of $V^M$ for all $g \in G \setminus  M M^{-1}$.  Let us fix  
 a direct sum decomposition $V^M = W \oplus U$ of $V^M$. Thus, if we define $\tilde{\varphi}_g$  by setting $\tilde{\varphi}_g(v)=0$ for all $v \in \phi_g(U)$ and $\tilde{\varphi}_g(v)=\varphi_g(v)$ if $v \in \phi_g(W)$ and extend by linearity on the whose space $V^{gM}$, then it is clear that $t$ is also asymptotically constant, which completes the proof of the theorem in the case when $G$ is a finitely generated infinite group.  
 \par 
The case when $G$ is a finite group is trivial since every injective endomorphism of a finite-dimensional vector space is an automorphism. Let us consider the general case where $G$ is an infinite group. Let $H$ be the subgroup of $G$ generated by $M$.  
Let  
$G/H=\{gH \colon g \in G\}$ be the set of all right cosets of $H$ in $G$. By identifying $x \in A^G$  with $(x\vert_{u})_{u \in G/H}$,  we obtain a factorization $A^G = \prod_{u \in G/H} A^u$. Moreover,  $\sigma_s= \prod_{u \in G/H} \sigma_s^u$ where $\sigma_s^u\colon A^u \to A^u$ is given by  $\sigma_{s}^u (y) = \sigma_s(x)\vert_u$ for all $y \in A^u$ and any $x \in A^G$ extending $y$. Similarly, we have $\sigma_c= \prod_{u \in G/H} \sigma_c^u$.
\par For every coset $u \in G/H$, let us choose $g_u\in G$ such that $g_H=1_G$. Then if $u \neq H$, we have $s\vert_u =c\vert_u$ and $\sigma_s^u=\sigma_c^u$ is conjugate to the restriction CA $ \sigma_{c\vert_H} = \sigma_{c}^H \colon A^H \to A^H$ by the uniform homeomorphism $\phi_u \colon A^u \to A^H$ given by  $\phi_u(y)(h)=y(g_uh)$ for all $y \in A^u$ and $h \in H$ (cf. the discussion  following \cite[Lemma~2.8]{cscp-invariant}). 
Hence, $\sigma_s$ and $\sigma_c$ are left-invertible (resp. injective) if and only if so are $\sigma_{s\vert_H}$ and $\sigma_{c \vert_H}$ (see also \cite[Theorem~1.2]{induction-restriction}). Consequently, the general case follows from the case when $G$ is finite or when $G$ is a finitely generated infinite group. The proof is thus complete. 
\end{proof}
\par 
Conversely, we show that left-invertibility implies stable injectivity for linear NUCA with finite memory whose configuration of local defining maps is asymptotically constant. 

\begin{theorem}
\label{t:converse-general-stably-injective-left-invertible-linear} 
Let $G$ be a  group and let $V$ be a finite-dimensional vector space. 
Suppose that $\tau \in \mathrm{LNUCA}_c(G,V)$ is a left-invertible  linear NUCA. Then $\tau$ stably  injective. 
\end{theorem}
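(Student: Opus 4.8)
The plan is to verify directly that $\sigma_p$ is injective for every $p$ in the orbit closure $\Sigma(s)$. Write $\tau = \sigma_s$ with finite memory $M$ and configuration $s \in \LL(V^M,V)^G$ asymptotic to a constant configuration $c$ of constant value $c_0 \in \LL(V^M,V)$, normalized so that $1_G \in M$ and $s\vert_{G\setminus M} = c\vert_{G\setminus M}$. Let $\sigma_v$ be a left inverse of finite memory $N$ (so $\sigma_v \circ \sigma_s = \Id$), which exists by hypothesis. When $G$ is finite, $V^G$ is finite-dimensional and every element of $\Sigma(s)$ has the form $gs$, so the claim is immediate because an injective endomorphism of a finite-dimensional vector space is bijective. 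Hence I focus on $G$ infinite, where \cite[Lemma~8.1]{phung-tcs} gives the decomposition $\Sigma(s) = \{gs \colon g \in G\} \cup \{c\}$.

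For the orbit part, $\sigma_s$ is injective simply because it admits the left inverse $\sigma_v$. For $p = gs$ I would invoke \cite[Lemma~5.1]{phung-tcs} to see that $\sigma_{gs}$ is conjugate to $\sigma_s$ through the shift homeomorphism $x \mapsto gx$; since conjugation preserves injectivity, every $\sigma_{gs}$ is injective. This disposes of all $p$ lying in the orbit.

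The one genuinely new point is injectivity of the uniform CA $\sigma_c$. Here I would use that $MN^{-1}$ is finite while $G$ is infinite to choose $g_0 \in G$ with $g_0 N \cap M = \varnothing$. For such $g_0$ and every $n \in N$ one has $g_0 n \notin M$, hence $s(g_0 n) = c_0$, so that $\sigma_s(x)(g_0 n) = \sigma_c(x)(g_0 n)$; equivalently $(g_0^{-1}\sigma_s(x))\vert_N = (g_0^{-1}\sigma_c(x))\vert_N$ for all $x \in V^G$. Feeding this into the identity $\sigma_v \circ \sigma_s = \Id$ evaluated at $g_0$ yields
\[
x(g_0) = v(g_0)\big((g_0^{-1}\sigma_c(x))\vert_N\big) \qquad (x \in V^G).
\]
Setting $u_0 := v(g_0) \in \LL(V^N,V)$ and using the $G$-equivariance $\sigma_c(hx) = h\sigma_c(x)$, I would propagate this single local rule to all of $G$: applying the displayed identity to the configuration $(g_0 g^{-1})x$ gives $x(g) = u_0\big((g^{-1}\sigma_c(x))\vert_N\big)$ for every $g \in G$. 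Thus the uniform CA $\sigma_{u_0}$ of memory $N$ is a left inverse of $\sigma_c$, so $\sigma_c$ is injective, and the verification that $\sigma_s$ is stably injective is complete.

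The main obstacle is exactly this last step. A tempting route is to take $g_n \to \infty$ with $g_n^{-1}s \to c$ and pass to a subsequential limit of the shifted left inverses $g_n^{-1}v$ to obtain a left inverse of $\sigma_c$; but $\sigma_v$ is only assumed to have finite memory and need \emph{not} be asymptotically constant, so its local maps may range over a non-compact subset of $\LL(V^N,V)$ and no limiting configuration is available. The device above sidesteps any compactness: translation-invariance of the uniform limit $\sigma_c$ allows a single well-chosen local map $v(g_0)$, taken far from the finite region where $s$ differs from $c$, to be spread over the whole universe so as to build a genuine uniform left inverse of $\sigma_c$.
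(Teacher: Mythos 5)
Your proposal is correct and follows essentially the same route as the paper's own proof: injectivity of $\sigma_{gs}$ on the orbit via shift conjugacy (\cite[Lemma~5.1]{phung-tcs}), and injectivity of the limit CA $\sigma_c$ by transplanting the left inverse's local rule at a point $g_0$ chosen so that its translated memory misses the perturbation region, turning it into a \emph{constant} left inverse of $\sigma_c$. The differences are only presentational: the paper first reduces to finitely generated infinite $G$ and normalizes both memories to a common set $M$ (choosing $g$ with $gM \cap M = \varnothing$ and expressing the transplantation through the induced local maps $f^+$ and projections $\pi_{gM^2,\{g\}}$), whereas you keep the two memories separate and propagate the single identity $x(g_0)=v(g_0)\bigl((g_0^{-1}\sigma_c(x))\vert_N\bigr)$ over all of $G$ by $G$-equivariance of $\sigma_c$ --- both choices are inessential.
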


\begin{proof}
As in the proof of Theorem~\ref{t:general-stably-injective-left-invertible-linear}, we can suppose without loss of generality that $G$ is a finitely generated infinite group. 
Since $\tau$ is a linear NUCA with finite memory and left-invertible, we can find a finite subset $M\subset G$ and $s,t \in S^G$ where $S= \LL(V^M, V)$ such that $\tau=\sigma_s$ and $\sigma_t \circ \sigma_s= \Id$. In particular, we deduce immediately that $\sigma_s$ is injective. 
\par 
As $s$ is asymptotically constant, we infer from \cite[Lemma~8.1]{phung-tcs} that $\Sigma(s)=\{g s \colon g \in G\}\cup \{c\}$ for some constant configuration $c \in S^G$. 
Note that by \cite[Lemma~5.1]{phung-tcs}, the injectivity of $\sigma_{gs}$ for all $g \in G$  follows from the injectivity of $\sigma_s$. 
We must show that $\sigma_c$ is injective. For this, we can suppose, up to enlarging $M$, that $s\vert_{G \setminus M}= c\vert_{G \setminus M}$. Since $G$ is infinite, there exists $g \in G$ such that $g M \cap M= \varnothing$. It follows that 
$s\vert_{gM}=c\vert_{gM}$. 
On the other hand, we infer from  the identity $\sigma_t \circ \sigma_s= \Id$ that 
\[
t(g) \circ f^+_{gM, s\vert_{gM}}= \pi_{gM^2, g}
\]
where $\pi_{F,E} \colon V^F \to V^E$ denotes the canonical projection induced by any inclusion of sets $E \subset F$. 
Consequently, $t(g) \circ f^+_{gM, c\vert_{gM}}= \pi_{gM^2, g}$. 
Since $c$ is constant, we deduce that 
$\sigma_d \circ \sigma_c= \Id$ where $d \in S^G$ is the constant configuration defined by $d(h)=t(g)$ for all $h \in G$. In particular, $\sigma_c$ is injective and we conclude that $\sigma_s$ is stably injective. The proof is thus complete. 
\end{proof}

\section{The twisted group ring $D^1(k[G])$} 
\label{s:5}
Given a group $G$ and a ring $R$ (with unit), recall that the group ring $R[G]$ is the $R$-algebra which admits  $G$ as a basis and whose  multiplication is defined by the group product on basis elements and the distributive law. 
%We embed naturally the coefficient ring $R$ into $R[G]$ via the map $c \mapsto c1_G$. Hence, for all $c \in R$ and $\alpha \in R[G]$, the elements $c \alpha$ and $\alpha c$ of $R[G]$ are well-defined. However, note that $c \alpha \neq \alpha c$ in general, for example, when $R$ is non-commutative. 

\begin{definition}
\label{d:d-1-k-g}
   Let $k$ be a ring and let $G$ be a group. We define $D^1(k[G])$ as the Cartesian product 
   \[
D^1(k[G])= k[G] \times (k[G])[G]. 
   \]
\par Elements of $D^1(k[G])$ are couples $(\alpha, \beta)$ where $\alpha \in k[G]$ is called the \emph{regular part} and $\beta \in (k[G])[G]$ is called the \emph{singular part} of $(\alpha, \beta)$. The addition operation of $D^1(k[G])$ is component-wise:  
\[
(\alpha_1, \beta_1) + (\alpha_2, \beta_2)= (\alpha_1+\alpha_2, \beta_1 + \beta_2). 
\]
\par 
The multiplication operation  $*\colon D^1(k[G]) \times D^1(k[G]) \to D^1(k[G])$ is defined as follows: 
\[
(\alpha_1, \beta_1) * (\alpha_2, \beta_2) = (\alpha_1 \alpha_2, \alpha_1 \beta_2 + \beta_1 \alpha_2 + \beta_1 \beta_2).  
\]
\par 
Here, $\alpha_1 \alpha_2$ is computed with the multiplication rule in the group ring $k[G]$. However, for $\alpha \in k[G]$ and $\beta, \gamma \in (k[G])[G]$, we define, by abuse of notation, the \emph{twisted products} $\alpha \beta$, $\beta \alpha$, and $\beta \gamma$ as elements of $(k[G])[G]$ as follows, which should be distinguished from the multiplication rule of the group ring $(k[G])[G]$  with coefficients in $k[G]$. 
Let $g, h\in G$, we set:   
\begin{align}
\label{e:def-singular-multiplication} 
    (\alpha \beta)(g)(h)& = \sum_{t \in G} \alpha(t) \beta(gt)(t^{-1}h),
\\
    (\beta \alpha)(g)(h)&= \sum_{t \in G}  \beta(g)(t) \alpha(t^{-1}h), \nonumber
\\
    (\beta\gamma)(g)(h)&= \sum_{t \in G} \beta(g)(t) \gamma(gt)(t^{-1}h). \nonumber 
\end{align}
\end{definition}

\par 
It is not hard to check that the above product rule is associative and distributive with respect to addition.  For example, with the above $\alpha, \beta, \gamma$, and $g,h$, we have:

 \begin{align*}
   (\alpha (\beta\gamma))(g)(h) &= \sum_{t \in G}  \alpha(t) \sum_{q \in G} \beta(gt)(q) \gamma(gtq)(q^{-1}t^{-1}h)\\
   & = \sum_{t \in G}  \alpha(t) \sum_{r\in G} \beta(gt)(t^{-1}r) \gamma(gr)(r^{-1}h) \qquad (r=tq)\\
   & =  \sum_{r \in G} \sum_{t \in G}\alpha(t) \beta(gt)(t^{-1}r)\gamma(gr)(r^{-1}h) \\
   & =  ((\alpha \beta)\gamma)(g)(h). 
\end{align*}

The following   lemma tells us that $D^1(k[G])$ is indeed a ring with unit.  

\begin{lemma}
    For every group $G$ and every ring $k$, the set $D^1(k[G])$ equipped with the addition and multiplication operations as defined in Definition~\ref{d:d-1-k-g} is a ring with unit $(1_G, 0)$ and neutral element $(0,0)$. 
\end{lemma}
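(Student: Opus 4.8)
The plan is to verify directly that $(D^1(k[G]), +, *)$ satisfies the ring axioms. The additive group structure is immediate: since $D^1(k[G]) = k[G] \times (k[G])[G]$ carries the component-wise addition, it is the direct product of two abelian groups and hence an abelian group with neutral element $(0,0)$, with the additive inverse of $(\alpha,\beta)$ being $(-\alpha,-\beta)$. The real content is checking that the twisted multiplication $*$ is associative, that it distributes over addition on both sides, and that $(1_G, 0)$ is a two-sided identity.

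First I would record the three partial distributive laws for the twisted products in \eqref{e:def-singular-multiplication}. Because each of $\alpha\beta$, $\beta\alpha$, $\beta\gamma$ is defined by a formula that is $k$-bilinear in its two arguments (the coefficients $\alpha(t)$, $\beta(g)(t)$, $\gamma(\cdots)$ enter linearly, and summation is additive), each twisted product is additive in each variable separately. From these, the distributivity of $*$ follows by expanding: for instance $(\alpha_1,\beta_1)*\big((\alpha_2,\beta_2)+(\alpha_3,\beta_3)\big)$ has regular part $\alpha_1(\alpha_2+\alpha_3) = \alpha_1\alpha_2 + \alpha_1\alpha_3$ by distributivity in $k[G]$, and singular part $\alpha_1(\beta_2+\beta_3) + \beta_1(\alpha_2+\alpha_3) + \beta_1(\beta_2+\beta_3)$, which splits into the two expected singular parts using additivity of the twisted products. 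The identity axiom is a short direct check: with $(\alpha_2,\beta_2)=(1_G,0)$ one gets regular part $\alpha_1 \cdot 1_G = \alpha_1$ and singular part $\alpha_1\cdot 0 + \beta_1\cdot 1_G + \beta_1\cdot 0 = \beta_1 \cdot 1_G$, and evaluating $(\beta_1 \cdot 1_G)(g)(h) = \sum_t \beta_1(g)(t)\,1_G(t^{-1}h) = \beta_1(g)(h)$ using that $1_G \in k[G]$ is supported at the identity; the left identity is symmetric.

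The main obstacle is associativity, since $*$ mixes four distinct bilinear operations and one must check $\big((\alpha_1,\beta_1)*(\alpha_2,\beta_2)\big)*(\alpha_3,\beta_3) = (\alpha_1,\beta_1)*\big((\alpha_2,\beta_2)*(\alpha_3,\beta_3)\big)$. The regular parts agree because associativity holds in $k[G]$. For the singular parts, expanding both sides produces a sum of terms, each of the form (product of three factors drawn from the $\alpha$'s and $\beta$'s). The strategy is to match these terms pattern by pattern according to which of the three inputs contribute a regular versus singular factor; the already-verified computation $(\alpha(\beta\gamma))(g)(h) = ((\alpha\beta)\gamma)(g)(h)$ in the excerpt is the prototype, and the remaining cases ($\beta_1\beta_2\beta_3$, the mixed triples such as $\alpha_1\beta_2\beta_3$, $\beta_1\alpha_2\beta_3$, $\beta_1\beta_2\alpha_3$, and the two-regular cases) are handled by the same reindexing trick of substituting $r = tq$ (or its analogue) to align the inner summation variables.

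I expect each individual associativity case to be a routine but bookkeeping-heavy reindexing, so rather than grinding through all of them I would present one representative triple-singular case $\beta_1\beta_2\beta_3$ in full and remark that the mixed cases follow by the identical change of summation variable, together with the observation that the whole multiplication rule is exactly what is forced by realizing $(\alpha,\beta)$ as the operator $\alpha + \varepsilon\beta$ with $\varepsilon^2 = \varepsilon$ (more precisely the structure encoded later by the isomorphism of Theorem~\ref{t:iso-ring-first}); this conceptual remark makes the term-matching transparent and confirms that no terms are lost or doubled. With distributivity, the identity, and associativity established, $(D^1(k[G]), +, *)$ is a ring with unit $(1_G,0)$, completing the proof.
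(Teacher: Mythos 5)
Your proposal is correct and takes essentially the same route as the paper: direct verification of the ring axioms, with the additive structure and distributivity immediate from component-wise addition and bilinearity of the twisted products, the unit checked from the defining formulas, and associativity reduced to the mixed associativity identities of the twisted products, each established by the same change of summation variable ($r=tq$) of which the paper works out one representative case. One caution: keep your closing remark about the operator realization strictly heuristic, since the isomorphism of Theorem~\ref{t:iso-ring-first} requires $G$ infinite while the lemma is stated for every group --- but your argument does not actually lean on it, so this is not a gap.
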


\begin{proof}
Since the addition is component-wise and $k[G]$ and $(k[G])[G]$  are abelian groups, $D^1(k[G])$ is also an abelian group.  
It is clear that $(\alpha, \beta)*(1_G, 0)=(1_G, 0)*(\alpha, \beta)=(\alpha, \beta)$ 
for all $(\alpha, \beta) \in D^1(k[G])$. Moreover, the associativity of the multiplication is satisfied since  for all $(\alpha_i, \beta_i) \in D^1(k[G])$ ($i=1,2,3$), we find that: 
\begin{align*} 
& \left( (\alpha_1, \beta_1) * (\alpha_2, \beta_2)\right)  * (\alpha_3, \beta_3)  = (\alpha_1 \alpha_2, \alpha_1 \beta_2 + \beta_1 \alpha_2 + \beta_1 \beta_2) * (\alpha_3, \beta_3) \\
& = 
(\alpha_1 \alpha_2 \alpha_3, \alpha_1 \alpha_2 \beta_3+ \alpha_1 \beta_2 \alpha_3 + \beta_1 \alpha_2  \alpha_3+ \beta_1 \beta_2  \alpha_3+  
\alpha_1 \beta_2 \beta_3+ \beta_1 \alpha_2\beta_3 + \beta_1 \beta_2\beta_3)
\\
&=
(\alpha_1 \alpha_2 \alpha_3, \alpha_1 \alpha_2 \beta_3+ \alpha_1 \beta_2 \alpha_3 + 
\alpha_1 \beta_2 \beta_3+ \beta_1 \alpha_2 \alpha_3+ \beta_1 \alpha_2\beta_3 + \beta_1 \beta_2  \alpha_3+  
 \beta_1 \beta_2\beta_3) 
\\
& =  (\alpha_1, \beta_1) * (\alpha_2 \alpha_3, \alpha_2 \beta_3 + \beta_2 \alpha_3 + \beta_2 \beta_3)
\\
& = (\alpha_1, \beta_1) * \left( (\alpha_2, \beta_2) * (\alpha_3, \beta_3) \right). 
\end{align*} 
\par 
Finally, we see without difficulty that  the distributivity of $D^1(k[G])$ follows from the distributivity of $k[G]$ and $(k[G])[G]$. Hence, we conclude that $D^1(k[G])$ is a ring with  unit $(1_G, 0)$ and neutral element $(0,0)$. 
\end{proof} 
\par 
The next lemma says that the generalized group ring $D^1(k[G])$ contains naturally the group ring $k[G]$ as the subring of regular elements, i.e., elements whose singular parts are zero. 
\begin{lemma}
    \label{l:d-1-k-g-contains-k-g} 
Let $k$ be a ring and let $G$ be a group. Then we have a canonical  embedding of rings $\varphi \colon k[G] \hookrightarrow D^1(k[G])$ given by the formula $\varphi(\alpha)=  (\alpha,0)$ for all $\alpha \in k[G]$.
\end{lemma}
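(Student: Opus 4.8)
The plan is to verify directly the four defining properties of a ring embedding: that $\varphi$ is additive, multiplicative, unit-preserving, and injective. Since the statement is elementary, the work consists entirely in unwinding the definitions from Definition~\ref{d:d-1-k-g}; the only point requiring a moment's attention is the behaviour of the twisted products against the zero element of $(k[G])[G]$.

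First I would dispose of additivity, which is immediate because the addition on $D^1(k[G])$ is component-wise: $\varphi(\alpha_1 + \alpha_2) = (\alpha_1 + \alpha_2, 0) = (\alpha_1, 0) + (\alpha_2, 0) = \varphi(\alpha_1) + \varphi(\alpha_2)$. The substantive step is multiplicativity, and the key observation is that each of the three twisted products in \eqref{e:def-singular-multiplication} vanishes as soon as its $(k[G])[G]$-valued argument is $0$. Concretely, evaluating the defining sums at any $g, h \in G$ gives $(\alpha \, 0)(g)(h) = \sum_{t} \alpha(t)\, 0(gt)(t^{-1}h) = 0$, and likewise $(0\, \alpha)(g)(h) = 0$ and $(0\,0)(g)(h) = 0$. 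Therefore the singular part of the product collapses entirely, and the multiplication rule yields
\[
\varphi(\alpha_1) * \varphi(\alpha_2) = (\alpha_1, 0) * (\alpha_2, 0) = (\alpha_1 \alpha_2,\; \alpha_1\, 0 + 0\, \alpha_2 + 0\, 0) = (\alpha_1 \alpha_2, 0) = \varphi(\alpha_1 \alpha_2),
\]
where I use that the first component $\alpha_1 \alpha_2$ is, by definition, exactly the group-ring product in $k[G]$.

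The remaining two checks are routine. Unit preservation holds because $\varphi(1_{k[G]}) = \varphi(1_G) = (1_G, 0)$, which is the unit of $D^1(k[G])$ by the preceding lemma. Injectivity is immediate: if $\varphi(\alpha) = \varphi(\alpha')$ then $(\alpha, 0) = (\alpha', 0)$, so comparing first components gives $\alpha = \alpha'$. There is no genuine obstacle in this proof; the one item worth flagging is simply to confirm that the vanishing of the twisted products against $0$ is literal from the index conventions in \eqref{e:def-singular-multiplication}, so that the singular component of $\varphi(\alpha_1) * \varphi(\alpha_2)$ is honestly zero rather than merely formally so.
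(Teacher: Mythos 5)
Your proof is correct and follows essentially the same route as the paper's: a direct verification from Definition~\ref{d:d-1-k-g} that $\varphi$ is additive, multiplicative, unit-preserving, and injective. The paper simply records these checks as immediate consequences of the definitions, whereas you spell out the one substantive point (that the twisted products $\alpha_1\,0$, $0\,\alpha_2$, and $0\,0$ vanish termwise in \eqref{e:def-singular-multiplication}), which is a welcome but not essentially different elaboration.
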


\begin{proof}
The map $\varphi$ is trivially injective. 
Moreover, it is a direct consequence of the definition of the addition of multiplication operations of $D^1(k[G])$ that $\varphi(\alpha_1) + \varphi(\alpha_2) = \varphi(\alpha_1+\alpha_2)$ and 
$\varphi(\alpha_1 \alpha_2) =\varphi(\alpha_1\alpha_2)$ for all $\alpha_1, \alpha_2 \in k[G]$. 
\end{proof}

\section{Non-uniform linear NUCA $(k^n)^G \righttoleftarrow$ and $D^1(M_n(k)[G])$}
\label{s:6}
Let $k$ be a field and let $G$ be a group. Let us fix an integer $n \geq 1$ and denote $V= k^n$. 
Recall that $\mathrm{LNUCA}_{c}(G, k^n)$ is the $k$-algebra of all linear NUCA with finite memory $\tau \colon (k^n)^G \to (k^n)^G$ which admit asymptotically constant  configurations of local defining maps. The multiplication of $\mathrm{LNUCA}_c(G,k^n)$ is given by the composition of maps and whose addition is component-wise. 
\par 
With every element $\omega = (\alpha, \beta) \in D^1(M_n(k)[G])$, we can associate a map $\tau^\omega \colon V^G \to V^G$ defined as follows: 
\begin{align}
\label{e:definition-tau-omega-linear}
\tau^\omega(x)(g) = \sum_{h \in G} \alpha(h) x(gh) + \sum_{h \in G} \beta(g)(h)x(gh)  \quad 
     \text{for all } x\in V^G, g \in G.  
\end{align}
\par 
For every element $\gamma \in M_n(k)[G]$, we denote the \emph{support} of $\gamma$ as the finite subset  $\supp(\gamma) = \{ g \in G\colon \gamma(g) \neq 0\}$ of $G$. 
\par 
\begin{lemma}
    \label{l:well-defined-tau-omega-linear-nuca} 
   The map $\tau^\omega \colon V^G \to V^G$ is a linear NUCA with finite memory. Moreover, $\tau^\omega$ admits a configuration of local defining maps which is asymptotic to a constant configuration, i.e., 
   $\tau^{\omega} \in \mathrm{LNUCA}_{c}(G, k^n)$. 
\end{lemma}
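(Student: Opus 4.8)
The plan is to realize $\tau^\omega$ explicitly as a non-uniform cellular automaton $\sigma_s$ for a single finite memory $M$ and a configuration $s\in\LL(V^M,V)^G$ of linear local maps, and then to read off asymptotic constancy directly from the fact that the singular part $\beta$ has finite support. Everything reduces to bookkeeping with supports.

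First I would unpack the defining formula \eqref{e:definition-tau-omega-linear}. Writing $\omega=(\alpha,\beta)$ with $\alpha\in M_n(k)[G]$ and $\beta\in(M_n(k)[G])[G]$, the regular part $\alpha$ is finitely supported, so $M_\alpha\coloneqq\supp(\alpha)$ is finite; the singular part $\beta$ is finitely supported as an element of the group ring with coefficients in $M_n(k)[G]$, so $E\coloneqq\supp(\beta)=\{g\in G:\beta(g)\neq 0\}$ is finite, and each $\beta(g)\in M_n(k)[G]$ is in turn finitely supported. Hence for every fixed $g$ both sums in \eqref{e:definition-tau-omega-linear} range over finitely many $h$, so $\tau^\omega$ is well defined, and its value at $g$ is visibly a linear function of $x$.

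Next I would take $M\coloneqq M_\alpha\cup\bigcup_{g\in E}\supp(\beta(g))$, a finite union of finite sets and therefore finite, which contains $\supp(\alpha)$ as well as every $\supp(\beta(g))$. Extending $\alpha$ and each $\beta(g)$ by zero outside their supports, the defining formula collapses to a single sum over $M$,
\[
\tau^\omega(x)(g)=\sum_{h\in M}\bigl(\alpha(h)+\beta(g)(h)\bigr)\,x(gh).
\]
I then define $s(g)\in\LL(V^M,V)$ by $s(g)(y)=\sum_{h\in M}(\alpha(h)+\beta(g)(h))\,y(h)$, which is linear in $y$. Using the shift identity $(g^{-1}x)(h)=x(gh)$ one checks at once that $s(g)((g^{-1}x)\vert_M)=\tau^\omega(x)(g)$, so $\tau^\omega=\sigma_s$ is a linear NUCA with finite memory $M$.

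Finally, asymptotic constancy falls out of the same computation: for every $g\in G\setminus E$ we have $\beta(g)=0$, whence $s(g)(y)=\sum_{h\in M}\alpha(h)\,y(h)$, a map $c\in\LL(V^M,V)$ that does not depend on $g$. Thus $s$ coincides with the constant configuration equal to $c$ on the cofinite set $G\setminus E$, which is precisely the requirement $\tau^\omega\in\mathrm{LNUCA}_c(G,k^n)$. The one point I would flag as the crux — rather than a routine unwinding — is that both the existence of a uniform finite memory and the asymptotic constancy rely on $\beta$ having \emph{finite} support: it is only the $g$-dependent supports $\supp(\beta(g))$ that could prevent a single finite $M$ from working, and their union stays finite exactly because $\beta(g)\neq 0$ for only finitely many $g$.
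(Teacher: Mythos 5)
Your proof is correct and follows essentially the same route as the paper: both choose the memory $M=\supp(\alpha)\cup\bigcup_{g}\supp(\beta(g))$ (finite because $\beta$ has finite support and each $\beta(g)$ does too), define $s(g)$ by the combined sum over $M$, verify $\sigma_s=\tau^\omega$, and observe that $s$ agrees with the constant configuration determined by $\alpha$ outside $E=\supp(\beta)$. Your explicit flagging of why the union of supports is finite is a point the paper leaves implicit, but it is the same argument.
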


\begin{proof}
Since $\alpha(g) , \beta(g)(h) \in M_n(k)$ for all $g,h \in G$, it follows from \eqref{e:definition-tau-omega-linear} that $\tau^\omega$ is a linear map. Consider the following finite subset of $G$: 
\[
M =  \cup_{g \in G} \supp \beta(g) \cup \supp \alpha.
\]
\par 
We define a configuration of local defining maps $s \in S^G$ where $S= \LL(V^G, V)$ as follows. For every $g \in G$, let $s(g) \in S$ be the linear map determined for all $w \in V^M$ by: 
\[
s(g)(w) = \sum_{h \in M} \alpha(h)w +  \sum_{h \in M} \beta(g)(h)w.  
\]
\par 
Let $x \in V^G$ and $g \in G$. Then we infer from the definition \eqref{e:definition-tau-omega-linear} and the choice of $M$ that: 
\begin{align*}
    \sigma_s(x)(g) &= s(g)((g^{-1}x)\vert_M) \\ 
    & = \sum_{h \in M} \alpha(h)x(gh) +  \sum_{h \in M} \beta(g)(h)x(gh)\\
    & = \sum_{h \in G} \alpha(h)x(gh) +  \sum_{h \in G} \beta(g)(h)x(gh)\\
    & = \tau^\omega(x)(g). 
\end{align*}
\par 
We deduce that $\tau^\omega= \sigma_s$ is indeed a linear NUCA with finite memory. On the other hand, if we denote $E= \supp \beta$ then $E$ is a finite subset of $G$ and we have $s(g)= \alpha(g)$ for all $g \in G \setminus E$ by construction. Consequently, $s$ is asymptotic to the constant configuration $\alpha^G$. Thus, 
$\tau^\omega \in \mathrm{LNUCA}_c(G, k^n)$ and 
the proof is complete.    
\end{proof}
\par 
It turns out that the converse of the above lemma also holds. In other words, every linear NUCA over $V^G$ with finite memory and asymptotically constant configuration of local defining maps arises uniquely as a map $\tau^\omega$ described above.  
More specifically, the following results says that the map $\omega \mapsto \tau^{\omega}$ is a ring isomorphism when $G$ is infinite.  
\begin{theorem}
\label{t:iso-ring-first}
     Let $k$ be a field and let $G$ be an infinite group. Then for every integer $n \geq 1$, the map $\Psi \colon D^1(M_n(k)[G]) \to \mathrm{LNUCA}_{c}(G, k^n)$ given by $\omega \mapsto \tau^{\omega}$ is a $k$-linear ring isomorphism. 
\end{theorem}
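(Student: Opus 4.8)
The plan is to check, in order, that $\Psi$ is well defined, $k$-linear, unital, multiplicative, injective, and surjective. Well-definedness is precisely Lemma~\ref{l:well-defined-tau-omega-linear-nuca}, and $k$-linearity is immediate from \eqref{e:definition-tau-omega-linear}: both $\alpha$ and $\beta$ enter the formula for $\tau^\omega$ linearly, while the additions on $D^1(M_n(k)[G])$ and on $\mathrm{LNUCA}_c(G,k^n)$ are both component-wise. The unit is disposed of at once: writing $1_G$ for the identity $I_n\cdot 1_G$ of $M_n(k)[G]$, formula \eqref{e:definition-tau-omega-linear} gives $\tau^{(1_G,0)}(x)(g)=I_n\,x(g)=x(g)$, so $\Psi(1_G,0)=\Id$.

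The core of the argument is multiplicativity, namely $\tau^{\omega_1*\omega_2}=\tau^{\omega_1}\circ\tau^{\omega_2}$ for $\omega_i=(\alpha_i,\beta_i)$. First I would expand the composition by substituting the expression for $\tau^{\omega_2}(x)(gh)$ into $\tau^{\omega_1}(\tau^{\omega_2}(x))(g)$; this yields a double sum over $h,h'\in G$ that breaks into four groups of terms, indexed by the pairs $(\alpha_1,\alpha_2)$, $(\alpha_1,\beta_2)$, $(\beta_1,\alpha_2)$, $(\beta_1,\beta_2)$, each of the form $\sum_{h,h'}(\cdots)\,x(ghh')$. Then I would evaluate the right-hand side $\tau^{\omega_1*\omega_2}$ term by term, using the group-ring product for the regular part $\alpha_1\alpha_2$ and the three twisted products of \eqref{e:def-singular-multiplication} for the singular part $\alpha_1\beta_2+\beta_1\alpha_2+\beta_1\beta_2$. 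In each of the four cases the substitution $h=th'$ (equivalently $h'=t^{-1}h$) turns the convolution defining the (twisted) product into the matching group of terms of the composition. This bookkeeping is the main obstacle: it is elementary but must be done carefully, since it is exactly the evaluation of $\beta_2$ at the shifted point $gt$ rather than $t$ in \eqref{e:def-singular-multiplication} that makes the change of variables succeed, and this is precisely what distinguishes the twisted product from the ordinary group-ring product on $(M_n(k)[G])[G]$.

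For injectivity I would use that $G$ is infinite. If $\tau^\omega=0$ with $\omega=(\alpha,\beta)$, then for each $g\in G$ the map $x\mapsto\sum_{h}(\alpha(h)+\beta(g)(h))\,x(gh)$ vanishes identically; testing it on configurations supported at a single point forces $\alpha(h)+\beta(g)(h)=0$ in $M_n(k)$ for all $g,h\in G$. Since $\beta$ has finite support as an element of $(M_n(k)[G])[G]$ and $G$ is infinite, we may choose $g_0$ with $\beta(g_0)=0$, whence $\alpha=0$, and then $\beta(g)=-\alpha=0$ for every $g$; thus $\omega=0$. This is the only place where the hypothesis that $G$ is infinite is needed.

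Finally, for surjectivity I would begin with an arbitrary $\tau=\sigma_s\in\mathrm{LNUCA}_c(G,k^n)$ with finite memory $M$ and $s$ asymptotic to a constant configuration $c\in\LL(V^M,V)$, so $s(g)=c$ outside a finite set $E$. Expanding each linear map $s(g)\colon V^M\to V$ in matrix coefficients as $s(g)(w)=\sum_{h\in M}A_h(g)\,w(h)$ with $A_h(g)\in M_n(k)$, I would let $\alpha\in M_n(k)[G]$ record the coefficients of $c$ and let $\beta(g)\in M_n(k)[G]$ record those of $s(g)-c$. Then $\beta(g)=0$ for $g\notin E$, so $\beta\in(M_n(k)[G])[G]$ and $\omega=(\alpha,\beta)\in D^1(M_n(k)[G])$; using $(g^{-1}x)(h)=x(gh)$ a direct computation gives $\tau^\omega(x)(g)=\sum_{h}A_h(g)\,x(gh)=\sigma_s(x)(g)$, so $\Psi(\omega)=\tau$. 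Combined with the previous steps this shows $\Psi$ is a $k$-linear ring isomorphism.
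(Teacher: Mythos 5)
Your proposal is correct and follows essentially the same route as the paper's proof: well-definedness via Lemma~\ref{l:well-defined-tau-omega-linear-nuca}, the direct convolution computation with the substitution $r=t^{-1}h$ for multiplicativity, injectivity by exploiting the finite support of $\beta$ together with the infinitude of $G$, and surjectivity by expanding the local rules $s(g)$ and the constant limit $c$ into matrix coefficients to build $(\alpha,\beta)$. The only differences are cosmetic (you derive the pointwise identity $\alpha(h)+\beta(g)(h)=0$ before choosing $g_0$, and you verify the composition in the opposite direction of the paper's expansion), so no further changes are needed.
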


\begin{proof}
Let $V= k^n$. We claim that $\Psi$ is injective. Indeed, let $\omega=(\alpha, \beta) \in D^1(M_n(k)[G])$ be an element such that $\tau^\omega = 0$ as a map from $V^G$ to itself. Let $M =  \cup_{g \in G} \supp \beta(g) \cup \supp \alpha$ then $M$ is a finite subset of $G$. Since $G$ is infinite, we can choose some $g_0 \in G \setminus M$. In particular, $\beta(g_0)=0$ by the choice of $M$. 
Then for every $x \in V^G$, we find that $\tau^\omega(x)(g_0)=0$ and  it follows from  \eqref{e:definition-tau-omega-linear} that 
\[
\sum_{h \in M} \alpha(h)x(g_0h) =
\sum_{h \in G} \alpha(h)x(g_0h)=\tau^\omega(x)(g_0)=0.
\]
\par 
Since $x$ is arbitrary, we deduce that $\alpha(h)$ for all $h \in M$ and thus $\alpha=0$. Consequently, we infer again from  \eqref{e:definition-tau-omega-linear} that for all $g \in G$: 
\[
\sum_{h \in M} \beta(g)(h)x(gh)=\sum_{h \in G} \beta(g)(h)x(gh)=0
\] 
\par  Thus, $\beta(g)(h)=0$ for all $g,h \in G$. In other words, $\beta=0$ and we conclude that $\omega=0$. Hence, $\Psi$ is indeed injective as claimed. 
\par 
To check that $\Psi$ is surjective, let $\sigma_s \in \mathrm{LNUCA}_c(G, V)$ where $s \in S^G$ for some $S=\LL(V^M, V)$,  where $M \subset G$ is a finite subset, such that $s$ is asymptotic to a constant configuration $c \in S^G$. Up to enlarging $M$, we can also suppose that $s\vert_{G \setminus M} = c\vert_{G \setminus M}$. 
\par 
Since $c(1_G) \in \LL(V^M, V)$, there exist $\gamma_h \in \LL(V,V)=\M_n(k)$ for every $h \in M$ such that for all $w \in V^M$, we have $c(1_G)(w) = \sum_{h \in M} \gamma_h w$. 
Let us denote $\alpha = \sum_{h \in M}\gamma_h h \in M_n(k)[G]$.
\par 
For each $g \in M$, we define   $ \delta_g=s(g) - c(g) \in \LL(V^M,V)$. By linearity, there exists uniquely $\delta_g(h) \in M_n(k)= \LL(V,V)$ for $h \in M$ such that for all $w \in V^M$, we have: 
\[
\delta_g(w) = \sum_{h \in M} \delta_g(h)w(h). 
\]
\par 
Hence, we obtain an element $\mu_g = \sum_{h \in M} \delta_g(h) \in M_n(k)[G]$ for every $g \in M$. 
Let us denote $\beta= \sum_{g \in M} \mu_g g \in (M_n(k)[G])[G]$ 
and $\omega=(\alpha, \beta) \in D^1(M_n(k)[G])$. We claim that $\tau^\omega = \sigma_s$. Indeed, 
for every $x \in V^G$ and $g \in G$, we find that: 

\begin{align*}
\tau^\omega(x)(g) & = \sum_{h \in G} \alpha(h) x(gh) + \sum_{h \in G} \beta(g)(h)x(gh) \\
& = \sum_{h \in M} \gamma_h x(gh) + \sum_{h \in M} \mu_g(h)x(gh)\\
&=  c(1_G)((g^{-1}x)\vert_M) + \sum_{h \in M} \delta_g(h)x(gh)\\
& = c(g)((g^{-1}x)\vert_M) + \delta_g((g^{-1}x)\vert_M)\\
& = s(g)((g^{-1}x)\vert_M)\\
&=\sigma_s(x)(g).  
\end{align*}
\par 
We conclude that $\sigma_s = \tau^\omega=\Psi(\omega)$ from which follows the surjectivity of the map  $\Psi$. It is immediate from \eqref{e:definition-tau-omega-linear} that $\Psi$ is a $k$-linear homomorphism of groups and $\Psi$ sends the unit of $D^1(M_n(k)[G])$ to the unit of $\mathrm{LNUCA}_c(G, V)$. 
\par 
To finish the proof, we have to check that for all elements  $\omega=(\alpha, \beta)$ and $\omega'=(\alpha', \beta')$ of $D^1(M_n(k)[G])$, we have $\Psi(\omega * \omega') = \Psi(\omega) \circ \Psi(\omega')$. Indeed, using the formula  
$\omega * \omega' = (\alpha \alpha', \alpha \beta' + \beta \alpha'+ \beta \beta')$ and \eqref{e:def-singular-multiplication}, 
we can compute for all $x \in V^G$,  $g \in G$, and $y = \tau^{\omega'}(x)$ that:   

\begin{align*}
& \Psi(\omega * \omega') (x)(g) 
=  \sum_{h \in G} \left( \alpha\alpha'  +   (\alpha \beta' + \beta \alpha'+ \beta \beta')(g)\right)(h) x(gh) \\
& = \sum_{h, t \in G} \left( \alpha(t) ( \alpha'(t^{-1}h)+ \beta'(gt)(t^{-1}h)) + \beta(g)(t) (\alpha'(t^{-1}h) +  \beta'(gt)(t^{-1}h)\right)x(gh) 
\\
& = \sum_{t\in G} ( \alpha(t) + \beta(g)(t))\sum_{ r \in G}\left( \alpha'(r) +  \beta'(gt)(r) \right) x(gtr) \qquad \qquad  (r=t^{-1}h) \\
& = \sum_{t \in G} \left( \alpha(t) + \beta(g)(t)\right) y(gt) \\
& = \tau^\omega(y)(g) \\
& = \left( \Psi(\omega)\circ \Psi(\omega')\right) (x)(g). 
\end{align*}
\par 
It follows that $\Psi(\omega * \omega')=\Psi(\omega)\circ \Psi(\omega')$. The proof is thus complete. 
\end{proof}

\par 
Observe that Theorem~\ref{t:iso-ring-first} does not hold when $G$ is a finite group since the ring morphism $\Psi$ fails to be injective in this case.

\section{Stable finiteness of generalized group rings and stably $L$-surjunctive groups}
\label{s:7}
 The main goal of the present section is to show that the stable finiteness property of the generalized group ring $D^1(k[G])$ is equivalent to the surjunctivity property of the classes $\mathrm{LUNCA}_c(G,k^n)$ for every $n \geq 1$ (Theorem~\ref{t:equivalence-L-1-stable}).  
\par 
\par 
We begin with the following  isomorphism between the ring $M_n(D^1(k[G]))$ of square matrices of size $n \times n$ with coefficients in the generalized group ring $D^1(k[G])$ and the ring $D^1(M_n(k[G]))$. 
\begin{proposition}
\label{p:iso-ring-second}
 Let $k$ be a ring and let $G$ be a group. Then for every  integer $n \geq 1$, there exists a canonical ring isomorphism: 
\begin{equation}
    D^1(M_n(k)[G])  \simeq  M_n(D^1(k[G])). 
\end{equation}
\end{proposition}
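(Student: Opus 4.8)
The plan is to construct an explicit isomorphism
$\Phi \colon D^1(M_n(k)[G]) \to M_n(D^1(k[G]))$ by the entry-wise decomposition of matrix coefficients, and then to verify that $\Phi$ respects the two ring structures. Given $\omega = (\alpha, \beta) \in D^1(M_n(k)[G])$, so that $\alpha \colon G \to M_n(k)$ and $\beta \colon G \to M_n(k)[G]$ have finite support, I define for each pair $(i,j)$ the scalar parts $\alpha^{(ij)} \in k[G]$ and $\beta^{(ij)} \in (k[G])[G]$ by $\alpha^{(ij)}(g) = \alpha(g)_{ij}$ and $\beta^{(ij)}(g)(h) = \beta(g)(h)_{ij}$, where $(\,\cdot\,)_{ij}$ denotes the $(i,j)$-entry of the relevant matrix in $M_n(k)$. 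I then set $\Phi(\omega)$ to be the matrix whose $(i,j)$-entry is the couple $(\alpha^{(ij)}, \beta^{(ij)}) \in D^1(k[G])$.

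Additivity and bijectivity of $\Phi$ are immediate, since $\Phi$ merely rearranges the matrix coefficients of $\alpha$ and $\beta$ into the entries of a matrix over $D^1(k[G])$, a procedure that is invertible componentwise; moreover $\Phi$ carries the unit of $D^1(M_n(k)[G])$ to the identity matrix of $M_n(D^1(k[G]))$ (the diagonal matrix with entries $(1_G,0)$). The content of the proposition is therefore the multiplicativity $\Phi(\omega * \omega') = \Phi(\omega)\Phi(\omega')$ for $\omega' = (\alpha', \beta')$, where the right-hand product is matrix multiplication over $D^1(k[G])$, i.e.\ each entry is $\sum_\ell (\alpha^{(i\ell)}, \beta^{(i\ell)}) * ({\alpha'}^{(\ell j)}, {\beta'}^{(\ell j)})$ using the twisted product $*$ of $D^1(k[G])$.

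First I would treat the regular part, which recovers the classical isomorphism $M_n(k)[G] \simeq M_n(k[G])$: extracting the $(i,j)$-entry of the group-ring product $\alpha\alpha'$ and summing over the inner index $\ell$ yields $(\alpha\alpha')^{(ij)} = \sum_\ell \alpha^{(i\ell)} {\alpha'}^{(\ell j)}$, because matrix multiplication couples only the index $\ell$ while the convolution over $G$ is untouched. For the singular part I would verify, term by term, the three identities
\begin{align*}
(\alpha \beta')^{(ij)} &= \sum_\ell \alpha^{(i\ell)} {\beta'}^{(\ell j)}, &
(\beta \alpha')^{(ij)} &= \sum_\ell \beta^{(i\ell)} {\alpha'}^{(\ell j)}, &
(\beta \beta')^{(ij)} &= \sum_\ell \beta^{(i\ell)} {\beta'}^{(\ell j)},
\end{align*}
where on the left the twisted products are those of matrix-valued elements and on the right they are the scalar twisted products of \eqref{e:def-singular-multiplication}. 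Each identity follows by substituting the defining formula \eqref{e:def-singular-multiplication}, extracting the $(i,j)$-entry, expanding the matrix product $(\,\cdot\,)_{ij} = \sum_\ell (\,\cdot\,)_{i\ell}(\,\cdot\,)_{\ell j}$, and interchanging the finite sums over $\ell$ and over the group variable $t$; for instance $(\beta\beta')^{(ij)}(g)(h) = \sum_t [\beta(g)(t)\beta'(gt)(t^{-1}h)]_{ij} = \sum_\ell \sum_t \beta^{(i\ell)}(g)(t)\, {\beta'}^{(\ell j)}(gt)(t^{-1}h) = \sum_\ell (\beta^{(i\ell)}{\beta'}^{(\ell j)})(g)(h)$. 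Adding the three identities and comparing with the definition $\omega * \omega' = (\alpha\alpha',\, \alpha\beta' + \beta\alpha' + \beta\beta')$ gives the equality of singular parts entry by entry, completing the multiplicativity.

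The verification is entirely routine; the only point requiring care — and thus the main (minor) obstacle — is the bookkeeping of the two independent pieces of structure being intertwined: the matrix index $\ell$, which is the summation index of matrix multiplication, and the group variables carrying the characteristic shifts $gt$ and $t^{-1}h$ of the twisted multiplication. Because matrix multiplication over $M_n(k)$ acts only on the indices while the $G$-convolutions with their twists act only on the group arguments, these two layers never interfere, and every interchange of the relevant finite sums is legitimate. I would finally remark that, unlike in Theorem~\ref{t:iso-ring-first}, no infinitude hypothesis on $G$ is needed here, since $\Phi$ is a purely algebraic repackaging of coefficients.
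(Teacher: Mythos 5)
Your proposal is correct and follows essentially the same route as the paper's proof: the map you define (entry-wise extraction $\Phi(\omega)_{ij} = (\alpha^{(ij)}, \beta^{(ij)})$) coincides with the paper's isomorphism $F$, and the multiplicativity is verified in the same way, reducing the regular part to the classical isomorphism $M_n(k)[G]\simeq M_n(k[G])$ and checking the three singular-part identities term by term via interchange of the finite sums over the matrix index and the group variables. No gaps; the argument and its key computation (e.g.\ for $(\beta\beta')^{(ij)}$) match the paper's.
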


\begin{proof}
By \cite[Lemma~9.4]{phung-2020}, there exists a canonical  isomorphism  of rings 
$ M_n(k)[G] \simeq   M_n(k[G])$ given by $\sum_{g\in G}A(g) g \mapsto (\sum_{g \in G} A(g)_{ij}g)_{1 \leq i,j \leq n}$. 
Consider the map 
 $F\colon  D^1(M_n(k)[G])  \to  M_n(D^1(k[G]))$ defined as follows. For $ x= (\alpha, \beta) \in D^1(M_n(k)[G])$, we can write 
$\beta= \sum_{g \in G} \beta(g)g \in (M_n(k)[G])[G]$ where $\beta(g) = 
(\beta(g)_{ij})_{1 \leq i,j \leq n} \in M_n(k)[G]$ for all $g \in G$.  
%and $(\beta(g)_{ij} = \sum_{h \in G}(\beta_g)_{ij}(h)h \in  k[G]$.  
Then we define $F(x) \in M_n(D^1(k[G]))$ by setting for all $1 \leq i,j \leq n$: 
\[
F(x)_{ij} = (\alpha_{ij}, \sum_{g \in G} \beta(g)_{ij} g) \in D^1(k[G]). 
\] 
\par 
It is clear that $F$ is a bijective homomorphism of groups and that 
$F((I_n, 0))= J_n$ where $I_n \in M_n(k)[G]$ and $J_n \in M_n(D^1(k[G]))$ are identity matrices of  $M_n(k)[G]$ and $M_n(D^1(k[G]))$ respectively. 
\par 
Now let $x_i=(\alpha_i, \beta_i)\in D^1(M_n(k)[G])$ for $i=1,2$. Then 
$x_1*x_2=(\alpha_1 \alpha_2, \alpha_1 \beta_2+ \beta_1 \alpha_2+\beta_1 \beta_2)$ and thus 
\begin{align}
\label{e:three-iso-canonical-1}
    F(x_1*x_2)_{ij} = ((\alpha_1\alpha_2)_{ij}, \sum_{g \in G} ((\alpha_1 \beta_2+ \beta_1 \alpha_2+\beta_1 \beta_2)(g))_{ij} g ).
\end{align}
\par 
On the other hand, we find that 
\begin{align*}
    (F(x_1)F(x_2))_{ij}   = \sum_{r=1}^n F(x_1)_{ir}*F(x_2)_{rj} = \sum_{r=1}^n ((\alpha_1)_{ir},   (\beta_1)_{ir})*((\alpha_2)_{rj},   (\beta_2)_{rj}). 
\end{align*}
\par 
 Therefore, if we denote $(F(x_1)F(x_2))_{ij}= (u,v)$ then 
 \begin{align}
 \label{e:three-iso-canonical-2}
 u= \sum_{r=1}^n (\alpha_1)_{ir} (\alpha_2)_{rj} = (\alpha_1\alpha_2)_{ij}
 \end{align}
 by the definition of matrix multiplication. Moreover, we deduce from the definition of the operation $*$ that $v= \sum_{g \in G} v(g) g$ where:  
 \begin{align*}
     v(g) & = \sum_{r=1}^n   ((\alpha_1)_{ir}(\beta_2)_{rj})(g) + 
    ((\beta_1)_{ir}(\alpha_2)_{rj})(g)+
    ((\beta_1)_{ir}(\beta_2)_{rj})(g).
 \end{align*}
 \par 
We infer from \eqref{e:def-singular-multiplication} that: 

\begin{align*}
    \sum_{r=1}^n ((\alpha_1)_{ir}(\beta_2)_{rj})(g)
    & =  \sum_{r=1}^n \sum_{h \in G}((\alpha_1)_{ir}(\beta_2)_{rj})(g)(h) h \\ 
    & =  \sum_{r=1}^n \sum_{h, t \in G} \alpha_1(t)_{ir} \beta_2(gt)(t^{-1}h)_{rj} h\\
    & =  \sum_{h\in G} \sum_{t\in G}\sum_{r=1}^n  \alpha_1(t)_{ir} \beta_2(gt)(t^{-1}h)_{rj} h\\
    & =  \sum_{h \in G} (\alpha_1 \beta_2)(g)(h)_{ij} h  \\
    & = (\alpha_1 \beta_2)(g)_{ij}. 
\end{align*}
\par 
Similarly, we have the equalities $ \sum_{r=1}^n ((\beta_1)_{ir}(\alpha_2)_{rj})(g) = (\beta_1 \alpha_2)(g)_{ij}$ and also 
$ \sum_{r=1}^n ((\beta_1)_{ir}(\beta_2)_{rj})(g) = (\beta_1 \beta_2)(g)_{ij}$. 
 Comparing to \eqref{e:three-iso-canonical-1}, it follows that $v$ is equal to the singular part of $F(x_1*x_2)_{ij}$. Consequently, we deduce from \eqref{e:three-iso-canonical-2} that for all $1 \leq i,j \leq n$, we have: 
 \[
(F(x_1)F(x_2))_{ij} = F(x_1*x_2)_{ij}. 
 \]
 \par 
 Hence, $F(x_1)F(x_2)= F(x_1*x_2)$ and we can finally  conclude that $F$ is a ring automorphism. The proof is thus complete. 
%Consider the map $F\colon  \colon D^1(M_n(k[G]))  \to  M_n(D^1(k[G]))$ defined as follows. For $ (\alpha, \beta) \in D^1(M_n(k)[G])$, we can write $\alpha = \sum_{g \in G}\alpha(g)g \in M_n(k)[G]$ and $\beta= \sum_{g \in G} \beta_gg \in (M_n(k)[G])[G]$ where $\beta_g = \sum_{h \in G} \beta_g(h) h \in M_n(k)[G]$ for all $g \in G$. 
\end{proof}
%\par 
%\begin{definition}   We  say that a group $G$ is \emph{$L^1$-surjunctive} if for every finite-dimensional vector space $V$ over a field (not necessarily finite), it holds that every stably  injective $\tau  \in \mathrm{LNUCA}_c(G,V)$ is also surjective.    \par A group $G$ is said to be \emph{finitely $L^1$-surjunctive} if for every  vector space $V$, it holds that every stably  injective $\tau  \in \mathrm{LNUCA}_c(G,V)$ is also surjective. \end{definition}\par It is trivial from the definition that every $L^1$-surjunctive group is finitely $L^1$-surjunctive. In fact, the converse also holds as we  shall see in Section~\ref{s:main-general}. 
\par 
The main result of the section is the following dynamical characterization of the direct finiteness of the ring $M_n(D^1(k[G]))$.  
\par 
\begin{theorem}
\label{t:equivalence-L-1-stable}
Let $G$ be an infinite group and let $k$ be a field. Then for every integer $n \geq 1$, the following are equivalent: 
    \begin{enumerate}[\rm (i)]
    \item 
    every  stably injective $\tau \in \mathrm{LNUCA}_c(G,k^{n})$   is surjective; 
     \item 
    the ring $\mathrm{LNUCA}_c(G,k^n)$ is directly finite; 
    \item  the ring $M_n(D^1(k[G]))$ is directly finite. 
    \end{enumerate}
\end{theorem}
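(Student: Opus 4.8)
The plan is to establish the cycle of implications $(iii) \Leftrightarrow (ii) \Rightarrow (i) \Rightarrow (ii)$, since $(ii) \Leftrightarrow (iii)$ follows almost immediately from the preceding structural results. First I would dispose of the equivalence $(ii) \Leftrightarrow (iii)$: by Theorem~\ref{t:iso-ring-first} there is a $k$-linear ring isomorphism $\mathrm{LNUCA}_c(G,k^n) \simeq M_n(D^1(k[G]))$ (combining the isomorphism $D^1(M_n(k)[G]) \simeq \mathrm{LNUCA}_c(G,k^n)$ of Theorem~\ref{t:iso-ring-first} with the isomorphism $D^1(M_n(k)[G]) \simeq M_n(D^1(k[G]))$ of Proposition~\ref{p:iso-ring-second}), and direct finiteness is a ring-theoretic property preserved under isomorphism. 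So $(ii)$ and $(iii)$ say literally the same thing about isomorphic rings.

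\medskip

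The substantive content is the equivalence of the dynamical condition $(i)$ with direct finiteness $(ii)$ of the ring $\mathrm{LNUCA}_c(G,k^n)$. Recall that a ring $R$ is directly finite precisely when $\sigma \tau = \Id$ forces $\tau \sigma = \Id$ for all $\sigma, \tau \in R$; here multiplication is composition of NUCA. For the implication $(ii) \Rightarrow (i)$, suppose the ring is directly finite and let $\tau \in \mathrm{LNUCA}_c(G,k^n)$ be stably injective. By Theorem~\ref{t:general-stably-injective-left-invertible-linear}, stable injectivity yields a left inverse $\sigma \in \mathrm{LNUCA}_c(G,k^n)$ with $\sigma \circ \tau = \Id$; direct finiteness then forces $\tau \circ \sigma = \Id$, so $\tau$ is a two-sided unit of the ring and in particular surjective. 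Conversely, for $(i) \Rightarrow (ii)$, assume every stably injective element is surjective and let $\sigma \circ \tau = \Id$ in the ring. The relation $\sigma \circ \tau = \Id$ exhibits $\tau$ as left-invertible, so Theorem~\ref{t:converse-general-stably-injective-left-invertible-linear} makes $\tau$ stably injective; hypothesis $(i)$ then gives that $\tau$ is surjective. A left-invertible surjective endomorphism is bijective, and since $\sigma$ is a left inverse of the bijection $\tau$ it must equal $\tau^{-1}$, whence $\tau \circ \sigma = \Id$ as well. Thus $\tau$ is right-invertible and the ring is directly finite.

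\medskip

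The main obstacle, and the point requiring genuine care, is the final step of $(i) \Rightarrow (ii)$: passing from ``$\tau$ is left-invertible and surjective'' to ``$\tau$ is bijective with $\sigma = \tau^{-1}$.'' Left-invertibility of $\tau$ already forces $\tau$ to be injective (if $\tau(x) = \tau(y)$ then $x = \sigma\tau(x) = \sigma\tau(y) = y$), so together with surjectivity $\tau$ is a set-theoretic bijection; one must then check that its inverse lands in the class $\mathrm{LNUCA}_c(G,k^n)$ so that the identity $\tau \circ \sigma = \Id$ holds inside the ring rather than merely as maps. This is where the left inverse $\sigma$ does the work: since $\sigma \circ \tau = \Id$ and $\tau$ is bijective, composing on the right with $\tau^{-1}$ gives $\sigma = \tau^{-1}$, so $\tau^{-1} = \sigma$ is automatically an element of the ring and $\tau \circ \sigma = \tau \circ \tau^{-1} = \Id$. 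I would also note that the reductions to finitely generated infinite $G$ already performed in the proofs of Theorems~\ref{t:general-stably-injective-left-invertible-linear} and~\ref{t:converse-general-stably-injective-left-invertible-linear} are inherited here, so that invoking those two theorems is legitimate without reproving the reduction, and the hypothesis that $G$ is infinite is exactly what lets Theorem~\ref{t:iso-ring-first} supply the ring isomorphism underlying $(ii) \Leftrightarrow (iii)$.
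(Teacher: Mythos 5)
Your proposal is correct and follows essentially the same route as the paper: the equivalence (ii)$\Leftrightarrow$(iii) via the ring isomorphism $\mathrm{LNUCA}_c(G,k^n) \simeq M_n(D^1(k[G]))$ from Theorem~\ref{t:iso-ring-first} and Proposition~\ref{p:iso-ring-second}, the implication (ii)$\Rightarrow$(i) via Theorem~\ref{t:general-stably-injective-left-invertible-linear}, and (i)$\Rightarrow$(ii) via Theorem~\ref{t:converse-general-stably-injective-left-invertible-linear} followed by the formal observation that a surjective map with a left inverse in the ring is two-sided invertible there. Your extra care about $\tau^{-1}$ landing in the ring is resolved exactly as the paper implicitly does, by noting $\sigma = \tau^{-1}$.
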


\begin{proof}
    The equivalence between (ii) and (iii) results directly from Proposition~\ref{p:iso-ring-second} and Theorem~\ref{t:iso-ring-first} which imply that $\mathrm{LNUCA}_c(G,k^n) \simeq M_n(D^1(k[G]))$. \par 
    Suppose that (i) holds and let $\tau, \sigma \in \mathrm{LNUCA}_c(G,k^n)$ be two linear NUCA such that $\tau\circ \sigma= \Id$. Then Theorem~\ref{t:converse-general-stably-injective-left-invertible-linear} implies that $\sigma$ is stably injective. Consequently, we infer from (i) that $\sigma$ is surjective. In particular, $\sigma$ is bijective and thus so is $\tau$. It follows from $\tau \circ \sigma= \Id$ that $\sigma \circ \tau= \Id$ as well. This shows that the ring $\mathrm{LNUCA}_c(G,k^n)$ is directly finite. Therefore, we have shown that (i) implies (ii). 
    \par
    Suppose now that (ii) holds and let $\tau \in \mathrm{LNUCA}_c(G,k^n)$ be a stably injective linear NUCA. Then we deduce from Theorem~\ref{t:general-stably-injective-left-invertible-linear} that $\tau$ is left-invertible, i.e., there exists $\sigma \in \mathrm{LNUCA}_c(G,k^n)$ such that $\sigma \circ \tau = \Id$. Hence, (ii) implies that $\tau \circ \sigma = \Id$ and it follows at once that $\tau$ is surjective. Therefore, we also have that (ii) implies (i). The proof is thus complete. 
\end{proof}

\begin{corollary}
\label{c:equivalence-L-1-stable}
    Suppose that $G$ is an infinite group. Then the following are equivalent: 
    \begin{enumerate}[\rm (a)]
    \item 
    the group $G$ is $L^1$-surjunctive; 
    \item for every field $k$, the ring $D^1(k[G])$ is stably finite. 
    \end{enumerate}
\end{corollary}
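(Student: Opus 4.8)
The plan is to derive this corollary purely formally from Theorem~\ref{t:equivalence-L-1-stable}, by unwinding the definitions of ``$L^1$-surjunctive'' and ``stably finite'' into families of statements indexed by a field $k$ and an integer $n \geq 1$, and then invoking the equivalence (i) $\iff$ (iii) of that theorem for each such pair $(k,n)$. The only real work is bookkeeping of the quantifiers; there is no new dynamical or ring-theoretic input beyond Theorem~\ref{t:equivalence-L-1-stable}.

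First I would translate condition (a). By definition, $G$ is $L^1$-surjunctive precisely when, for every field $k$ and every finite-dimensional $k$-vector space $V$, every stably injective $\tau \in \mathrm{LNUCA}_c(G,V)$ is surjective. Since any such $V$ is $k$-linearly isomorphic to $k^n$ with $n = \dim_k V$, and a choice of isomorphism $V \cong k^n$ induces an isomorphism of $k$-algebras $\mathrm{LNUCA}_c(G,V) \cong \mathrm{LNUCA}_c(G,k^n)$ carrying stably injective maps to stably injective maps and surjections to surjections, condition (a) is equivalent to the assertion that for every field $k$ and every integer $n \geq 1$, every stably injective $\tau \in \mathrm{LNUCA}_c(G,k^n)$ is surjective. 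This is exactly statement (i) of Theorem~\ref{t:equivalence-L-1-stable}, now ranging over all pairs $(k,n)$.

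Next I would translate condition (b). Recall that a ring $R$ is stably finite exactly when $M_n(R)$ is directly finite for every $n \geq 1$. Applying this with $R = D^1(k[G])$, condition (b) asserts that for every field $k$ and every integer $n \geq 1$ the ring $M_n(D^1(k[G]))$ is directly finite, which is precisely statement (iii) of Theorem~\ref{t:equivalence-L-1-stable}, again ranging over all pairs $(k,n)$.

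Finally, since $G$ is infinite, Theorem~\ref{t:equivalence-L-1-stable} applies for each field $k$ and each $n \geq 1$ and yields (i) $\iff$ (iii). Forming the conjunction of these equivalences over all fields $k$ and all integers $n \geq 1$ gives (a) $\iff$ (b), completing the proof. The one point to be careful about is that the universal quantifier over $V$ in the definition of $L^1$-surjunctivity does indeed capture every pair $(k,n)$; this is guaranteed by the classification of finite-dimensional vector spaces together with the naturality of $\mathrm{LNUCA}_c(G,-)$ under linear isomorphisms of the alphabet, so that no case is lost when passing between (a) and statement (i).
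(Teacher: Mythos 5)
Your proof is correct and takes the same route as the paper: the paper's own proof of this corollary is simply to invoke Theorem~\ref{t:equivalence-L-1-stable} for every field $k$ and every $n \geq 1$, and your write-up just makes explicit the quantifier bookkeeping (identifying $V \cong k^n$ and unpacking stable finiteness as direct finiteness of all matrix rings) that the paper leaves implicit.
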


\begin{proof}
    It is a direct consequence of Theorem~\ref{t:equivalence-L-1-stable}. 
\end{proof}

\section{Stable finiteness of generalized group rings and  $L^1$-surjunctive groups} 
\label{s:main-general} 

Extending \cite[Theorem~B]{phung-weakly}, we establish various characterizations of the stable finiteness of the ring $D^1(k[G])$ (for all field $k$)  notably in terms of the finitely $L^1$-surjunctivity of the group $G$. For ease of reading, we recall the statement of Theorem`\ref{t:main-first-intro} in the Introduction. 

\begin{theorem}
For every infinite group $G$, the following are equivalent:       
\begin{enumerate}[\rm (i)]
\item 
 $G$ is $L^1$-surjunctive; 
\item 
 $G$ is finitely $L^1$-surjunctive;
\item for every field $k$, the ring $D^1(k[G])$ is stably finite; 
\item 
for every finite field $k$, the ring $D^1(k[G])$ is stably finite; 
\item 
$G$ is dual $L^1$-surjunctive; 
\item 
$G$ is finitely dual $L^1$-surjunctive. 
\end{enumerate}
\end{theorem}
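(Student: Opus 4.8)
The plan is to funnel all six conditions through the direct finiteness of the matrix rings $M_n(D^1(k[G]))$, exploiting the dynamical characterization already established in Theorem~\ref{t:equivalence-L-1-stable}, and then to supply two further ingredients: a specialization argument reducing arbitrary fields to finite fields, and the transpose duality for linear NUCA that converts the dual statements into the surjunctive ones. First I would dispatch the surjunctive block (i)--(iv). The equivalence (i)$\Leftrightarrow$(iii) is exactly Corollary~\ref{c:equivalence-L-1-stable}. For (ii)$\Leftrightarrow$(iv) I would rerun the argument of that corollary but apply Theorem~\ref{t:equivalence-L-1-stable} only to finite fields: a finite vector space is precisely some $k^n$ with $k$ finite, so $G$ is finitely $L^1$-surjunctive iff for every finite field $k$ and every $n\geq1$ each stably injective $\tau\in\mathrm{LNUCA}_c(G,k^n)$ is surjective, which by Theorem~\ref{t:equivalence-L-1-stable} amounts to $M_n(D^1(k[G]))$ being directly finite for all $n$, i.e.\ to $D^1(k[G])$ being stably finite. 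The implication (iii)$\Rightarrow$(iv) is immediate, as every finite field is a field.

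The substantive step inside this block is (iv)$\Rightarrow$(iii), a reduction from finite to arbitrary fields. Suppose $D^1(k[G])$ fails to be stably finite for some field $k$; via Proposition~\ref{p:iso-ring-second} choose $n$ and $A,B\in D^1(M_n(k)[G])\simeq M_n(D^1(k[G]))$ with $AB=\Id$ and $BA\neq\Id$, and fix a nonzero entry $r\in k$ appearing in $BA-\Id$. Only finitely many elements of $k$ occur among the coefficients of $A$, $B$ and $r$; let $R_0\subset k$ be the commutative subring they generate, a finitely generated $\Z$-algebra. Since $r$ is invertible in $k$, the ring $R_0[1/r]$ is a nonzero finitely generated $\Z$-algebra, hence a Jacobson ring whose maximal ideals have finite residue fields, so there is a homomorphism $\pi\colon R_0\to\F$ onto a finite field $\F$ with $\pi(r)\neq0$. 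Because the multiplication rules of $D^1(-[G])$ in \eqref{e:def-singular-multiplication} involve only ring operations on the coefficients, $D^1(-[G])$ and $M_n$ are functorial in the coefficient ring, so $\pi$ induces a ring homomorphism $\Pi\colon D^1(M_n(R_0)[G])\to D^1(M_n(\F)[G])$. Applying $\Pi$ yields $\Pi(A)\Pi(B)=\Id$ while $\Pi(BA-\Id)\neq0$ since $r$ survives; hence $M_n(D^1(\F[G]))$ is not directly finite and $D^1(\F[G])$ is not stably finite for the finite field $\F$, contradicting (iv). This closes the loop among (i)--(iv).

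Finally I would treat the dual conditions (v),(vi) through the transpose of a linear NUCA. For $\tau\in\mathrm{LNUCA}_c(G,k^n)$ the transpose $\tau^{*}$, obtained by transposing the local defining maps and replacing each group translation by its inverse (equivalently, applying the involution $g\mapsto g^{-1}$ together with matrix transposition on $D^1(M_n(k)[G])$), again lies in $\mathrm{LNUCA}_c(G,k^n)$ because the finite supports of singular parts are preserved, and $\tau\mapsto\tau^{*}$ is an involutive bijection of this class. The duality for linear NUCA (cf.\ \cite{phung-dual-linear-nuca}) gives that $\tau$ is stably post-surjective iff $\tau^{*}$ is stably injective, and that $\tau$ is pre-injective iff $\tau^{*}$ is surjective. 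Therefore ``every stably post-surjective $\tau$ is pre-injective'' is equivalent, after the substitution $\rho=\tau^{*}$ and since $*$ is a bijection of the class onto itself, to ``every stably injective $\rho$ is surjective''; ranging $V$ over all finite-dimensional, respectively all finite, vector spaces gives (v)$\Leftrightarrow$(i) and (vi)$\Leftrightarrow$(ii), which completes the chain.

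I expect the main obstacle to be exactly these transpose-duality correspondences over a finite-dimensional alphabet --- guaranteeing that $*$ preserves the class $\mathrm{LNUCA}_c(G,k^n)$ and interchanges stable post-surjectivity with stable injectivity and pre-injectivity with surjectivity --- while the specialization in (iv)$\Rightarrow$(iii), though a standard Nullstellensatz argument, still relies essentially on the functoriality of $D^1(-[G])$ in the coefficient ring noted above. An alternative, more self-contained route to (v),(vi) would be to prove a dual analogue of Theorems~\ref{t:general-stably-injective-left-invertible-linear} and~\ref{t:converse-general-stably-injective-left-invertible-linear}, namely that stable post-surjectivity is equivalent to right-invertibility, and then to combine it with direct finiteness as in Theorem~\ref{t:equivalence-L-1-stable}.
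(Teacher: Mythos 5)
Your proposal is correct, and its overall architecture --- funneling (i)--(iv) through direct finiteness of $M_n(D^1(k[G]))$ via Theorem~\ref{t:equivalence-L-1-stable} and Corollary~\ref{c:equivalence-L-1-stable}, and handling (v), (vi) by the duality $\tau\mapsto\tau^*$ of \cite{phung-dual-linear-nuca} --- coincides with the paper's proof. The one place where you genuinely depart from the paper is the reduction from finite fields to arbitrary fields. The paper proves this dynamically (Theorem~\ref{t:main-reduction-grothendieck}): given a stably injective $\tau=\sigma_s$ over an arbitrary field, it takes the left inverse from Theorem~\ref{t:general-stably-injective-left-invertible-linear}, passes to an algebraically closed field, spreads the local identities out over a finitely generated $\Z$-algebra via \cite[Lemma~2.1]{phung-weakly}, restricts to the finite subspaces $H_{p,a,d}$ at closed points, applies the finite-field hypothesis there, and then lifts the resulting right-inverse identities back to $R$ (hence to $k$) using the Jacobson property and the equalizer argument of \cite[Lemma~7.2]{cscp-alg-ca}. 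You instead prove the contrapositive ring statement (iv)$\Rightarrow$(iii) directly: a pair $AB=\Id$, $BA\neq\Id$ in $D^1(M_n(k)[G])$ involves only finitely many coefficients, which generate a finitely generated $\Z$-algebra $R_0$; inverting a nonzero coefficient $r$ of $BA-\Id$ and taking a maximal ideal of $R_0[1/r]$ gives, by the general Nullstellensatz, a homomorphism to a finite field that preserves the equation $AB=\Id$ and keeps $r$ alive, since $D^1(-[G])$ (Definition~\ref{d:d-1-k-g} makes sense over any coefficient ring) and Proposition~\ref{p:iso-ring-second} are functorial in the coefficients. This is a complete argument, and it buys real economy: a counterexample need only be \emph{specialized}, whereas the paper's direct implication must \emph{manufacture} a right inverse over $k$, which is exactly why it needs the base change, spreading-out, and equalizer machinery. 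The trade-off is that your route yields no standalone dynamical statement like Theorem~\ref{t:main-reduction-grothendieck} (which has independent interest), only the ring-theoretic consequence needed here. Two cosmetic points: the map $\pi\colon R_0\to\F$ need not be onto, but its image is a finite domain, hence a finite field, so replace $\F$ by the image; and what makes $R_0[1/r]$ nonzero is simply that $R_0$ is a domain and $r\neq 0$, not the invertibility of $r$ in $k$.
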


\begin{proof}
Let $V$ be a finite-dimensional vector space and let  $\tau \in \mathrm{LNUCA}_c(G,V)$. Then we obtain a dual linear NUCA $\tau^* \in \mathrm{LNUCA}_c(G,V)$ whose dual is exactly $\tau$, that is, $(\tau^*)^* = \tau$ (see \cite{phung-dual-linear-nuca}).   
We infer from  
\cite[Theorem~A]{phung-dual-linear-nuca}  that $\tau$ is pre-injective if and only if $\tau^*$ is surjective and that $\tau^*$ is stably injective if and only if $\tau$ is stably post-surjective. Hence, we deduce immediately the equivalences (i)$\iff$(v) and (ii)$\iff$(vi). 
\par 
The equivalence (i)$\iff$(ii) is the content of Corollary~\ref{c:equivalence-L-1-stable}. Similarly, the exact same proof of Theorem~\ref{t:equivalence-L-1-stable} shows that (ii)$\iff$(iv). Finally, the equivalence (i)$\iff$(ii) results from 
Theorem~\ref{t:main-reduction-grothendieck} below. 
The proof is thus complete. 
\end{proof}
\par 
Our next key result extends \cite[Theorem~A]{phung-weakly}. The proof follows quite closely the reduction strategy of the proof of \cite[Theorem~A]{phung-weakly} and 
\cite[Theorem~B]{phung-geometric} which is   less involved in our linear case. 

\begin{theorem}
    \label{t:main-reduction-grothendieck}  Let $G$ be a   group and let $n\geq 1$ be an integer. Then the following are equivalent:      
    \begin{enumerate}[\rm (i)]
\item 
for every finite field $k$, all stably injective $\tau \in \mathrm{LNUCA}_c(G,k^n)$ are surjective; 
\item 
for every field $k$, all stably injective $\tau \in \mathrm{LNUCA}_c(G,k^n)$ are surjective. 
\end{enumerate}
\end{theorem}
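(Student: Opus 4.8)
The plan is to reduce both statements to the direct finiteness of the ring $M_n(D^1(k[G]))$ and then run a Nullstellensatz-style specialization argument descending from arbitrary fields to finite fields. First I note that the implication (ii)$\Rightarrow$(i) is trivial, since every finite field is a field, so only (i)$\Rightarrow$(ii) requires proof; and I may assume $G$ infinite, because for finite $G$ the space $V^G$ is finite-dimensional and every injective element of $\mathrm{LNUCA}_c(G,V)$ is automatically bijective, so both (i) and (ii) hold vacuously. For infinite $G$, Theorem~\ref{t:equivalence-L-1-stable} identifies, for each field $k$, the assertion that every stably injective $\tau\in\mathrm{LNUCA}_c(G,k^n)$ is surjective with the direct finiteness of $M_n(D^1(k[G]))$. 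Thus it suffices to prove: if $M_n(D^1(k[G]))$ is directly finite for every finite field $k$, then it is directly finite for every field $k$.

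The next step is a descent to a finitely generated coefficient ring. Here I would first record that the construction $k\mapsto D^1(k[G])$ is functorial: any ring homomorphism $\phi\colon R\to R'$ acts coefficientwise and, because the twisted products of \eqref{e:def-singular-multiplication} are finite $R$-linear expressions, induces ring homomorphisms $D^1(R[G])\to D^1(R'[G])$ and hence $M_n(D^1(R[G]))\to M_n(D^1(R'[G]))$. Now fix a field $k$ and elements $a,b\in M_n(D^1(k[G]))$ with $ab=1$; the goal is $ba=1$. Only finitely many scalars of $k$ occur among the finitely supported regular and singular parts of the $n^2$ entries of $a$ and $b$. Let $A\subseteq k$ be the subring generated over the prime ring ($\Z$ if $\car k=0$, and $\mathbb{F}_p$ if $\car k=p$) by these scalars. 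Then $A$ is a finitely generated $\Z$- or $\mathbb{F}_p$-algebra and, being a subring of a field, an integral domain. The inclusion $A\hookrightarrow k$ induces an injective map $M_n(D^1(A[G]))\hookrightarrow M_n(D^1(k[G]))$; since $a$, $b$ and $1$ all lie in its image and it is a ring homomorphism, injectivity forces the relation $ab=1$ to hold already in $M_n(D^1(A[G]))$.

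Finally comes the specialization, which I expect to be the main point. Suppose toward a contradiction that $ba\neq1$ in $M_n(D^1(A[G]))$; then some scalar coefficient $c\in A$ of $ba-1$ is nonzero. Since $A$ is a domain, $A[c^{-1}]$ is a nonzero finitely generated algebra over $\Z$ or $\mathbb{F}_p$; as these base rings are Jacobson with finite residue fields, the general Nullstellensatz produces a maximal ideal $\mathfrak{m}\subset A[c^{-1}]$ whose residue field $\kappa$ is a finite extension of a finite prime field, hence a finite field. The composite $\psi\colon A\hookrightarrow A[c^{-1}]\twoheadrightarrow\kappa$ is then a ring homomorphism into a finite field with $\psi(c)\neq0$, since $c$ is a unit in $A[c^{-1}]$. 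Applying the induced homomorphism $M_n(D^1(A[G]))\to M_n(D^1(\kappa[G]))$ sends $ab=1$ to the identity, while it sends $ba-1$ to an element whose coefficient in the distinguished position equals $\psi(c)\neq0$, so the image of $ba$ is not the identity. This contradicts the direct finiteness of $M_n(D^1(\kappa[G]))$ over the finite field $\kappa$, which holds by hypothesis via Theorem~\ref{t:equivalence-L-1-stable}. Hence $ba=1$, and $M_n(D^1(k[G]))$ is directly finite. The delicate points, as in the cited references, are verifying the functoriality of $D^1$ (so that reduction modulo $\mathfrak{m}$ is a genuine ring homomorphism compatible with the twisted multiplication) and arranging a finite-field quotient that keeps the chosen coefficient $c$ nonzero; the localize-then-Nullstellensatz device handles the latter uniformly in both characteristics.
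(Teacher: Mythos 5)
Your proposal is correct, but it takes a genuinely different route from the paper. The paper never reformulates the reduction as a statement about direct finiteness of $M_n(D^1(k[G]))$; instead it argues on the dynamical side: starting from a stably injective $\tau=\sigma_s$ over an arbitrary field, it produces a left inverse $\sigma_t$ (Theorem~\ref{t:general-stably-injective-left-invertible-linear}), encodes the identities $\sigma_t\circ\sigma_s=\Id$ and $\sigma_s\circ\sigma_t=\Id$ as finitely many local identities indexed by a finite set $M^*$, passes to an algebraically closed base field, constructs a model $(V_R,s_R,t_R)$ over a finitely generated $\Z$-algebra $R\subset k$, verifies the switched local identities at \emph{all} closed points of the relevant $R$-scheme (where the alphabet becomes a finite vector space and hypothesis (i) applies), and then uses a Jacobson-scheme/equalizer density argument to propagate these identities from closed points back to $R$ and finally to $k$. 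You instead transfer everything to ring theory via Theorem~\ref{t:equivalence-L-1-stable}, check functoriality of $k\mapsto M_n(D^1(k[G]))$ under coefficientwise ring homomorphisms (which is indeed routine, since the twisted products in \eqref{e:def-singular-multiplication} are finite bilinear expressions in the coefficients), descend $ab=1$ to a finitely generated subring $A\subset k$, and run a contradiction argument: a nonzero coefficient $c$ of $ba-1$ survives under one well-chosen specialization $A\to A[c^{-1}]/\mathfrak m=\kappa$ to a finite residue field, contradicting direct finiteness over $\kappa$. The two arguments rest on the same commutative-algebra engine (closed points of finite-type $\Z$-algebras have finite residue fields), but where the paper proves the desired identity at every closed point and invokes density, you preserve a single witness of failure at one closed point; your version is shorter, avoids schemes, equalizers, and the base change to an algebraic closure, and is closer in spirit to the classical reduction of Kaplansky-type direct finiteness from arbitrary fields to finite fields. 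What the paper's heavier route buys is a template that does not require the algebraic reformulation $\mathrm{LNUCA}_c(G,k^n)\simeq M_n(D^1(k[G]))$ at all, and hence adapts to nonlinear or algebraic-variety alphabets as in the cited works \cite{phung-weakly}, \cite{phung-geometric}, where no analogue of $D^1(k[G])$ is available.
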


\begin{proof} 
Since the case when $G$ is finite is clear and (ii)$\implies$(i) trivially,  we suppose in the rest of the proof that $G$ is an infinite group which satisfies (i).  
Let $V$ be a finite-dimensional vector space over a field $k$ (not necessarily finite) and let $\tau \in \mathrm{LNUCA}_c(G, V)$. Suppose that $\tau$ is stably injective. Then by definition we can choose a finite subset $M \subset G$ with $1_G \in M=M^{-1}$ and a configuration $s \in \LL(V^M,V)^G$ which is asymptotic to a constant configuration $c \in \LL(V^M,V)^G$ such that $\tau=\sigma_s$ and $s_{G \setminus M}=c\vert_{G \setminus M}$. We infer from Theorem~\ref{t:general-stably-injective-left-invertible-linear} that $\tau$ is left-invertible. Hence, we can find $\sigma \in \mathrm{LNUCA}_c(G,V)$ such that $\sigma \circ \tau =\Id$. Moreover, up to enlarging the finite set $M$, we can find $t \in \LL(V^M, V)^G$ asymptotic to a constant configuration $d\in \LL(V^M, V)^G$ such that $\sigma = \sigma_t$ and $t\vert_{G \setminus M}=d\vert_{G \setminus M}$. 
\par 
Let us denote $\Gamma= \tau(V^G)$. 
As $\sigma_t \circ \sigma_s= \Id$, we deduce for all $g \in G$ that 
\begin{equation}
    \label{e:local-magic-main-reduction}
    f^+_{\{g\}, t(g)}  \circ f^+_{gM, s \vert_{gM}} = \pi_{gM^2, \{g\}}
\end{equation}
where $\pi_{F, E} \colon V^F\to V^E$ denotes the canonical projection for all sets $E \subset F$.  Consider the similar condition where we switch the role of $s$ and $t$:  
\begin{equation}
    \label{e:local-magic-main-reduction-converse}
    f^+_{\{g\}, s(g)}  \circ f^+_{gM, t \vert_{gM}} = \pi_{gM^2, \{g\}}. 
\end{equation}
\par 
Since $G$ is infinite, we can choose a finite subset $M^*\subset G$ such that $M^2 \subsetneq M^*$. Then observe that $\sigma_t \circ \sigma_s= \Id$, resp. $\sigma_s \circ \sigma_t= \Id$, if and only if \eqref{e:local-magic-main-reduction}, resp.  
\eqref{e:local-magic-main-reduction-converse},  holds for all $g \in M^*$ (see \cite[Lemma~2.2]{phung-lef} for the case of CA). Hence, up to  making the base change to $k'$ (replacing $V$, $s(g)$, $t(g)$ resp. by $V \otimes_k k'$, $s(g) \otimes_k k'$, $t(g) \otimes_k k'$ etc.) where $k'$ is an  algebraically closed field which contains $k$, we can suppose without loss of generality that $k$ is algebraically closed. 
\par 
We obtain from  \cite[Lemma~2.1]{phung-weakly} a finitely generated $\Z$-algebra $R \subset k$ and $R$-modules  of finite type $V_R$ and: 
\[
s_R, t_R \in \mathrm{Hom}_{R-mod}((V_R)^M, V_R)^G 
\] 
such that for some fixed $g_0 \in M^*\setminus M^2$, $s_R(g) = s_R(g_0)$, $t_R(g)=t_R(g_0)$ for all $g \in G \setminus M^*$ and 
the following hold: 
\begin{enumerate}[\rm I.] 
    \item 
    $V= V_R \otimes_R k$,  
    \item   $s(g)= s_R(g) \otimes_R k$ and  $t(g)= t_R(g) \otimes_R k$ for all $g \in M^*$. 
    \end{enumerate}
    where $\pi_R \colon (V_R)^{E_n M} \to (V_R)^{\{1_G\}}$ is the canonical projection. Essentially, we can take $R=\Z[\Omega]$ where $\Omega \subset k$ is a finite subset consisting of the entries of the matrices which represent  the linear maps $s(g)$, $t(g)$ for all $g \in M^*$. 
\par 
Let us denote $S=\Spec R$ which is a $\Z$-scheme of finite type. 
Then we infer from \cite[Lemma~2.2]{phung-weakly} that the set of closed points of $V_R^{E_n}$ is given by $\Delta = \cup_{p \in \PP, a \in S_p,  d\in \N} H_{p,a,d}^{E_n}$. Here,  
 $\PP$ denotes the set of prime numbers. By $a \in S_p= S \otimes_\Z \mathbb{F}_p$ we mean $a$ is a closed point of $S_p$. In particular, $\kappa(a)$ is a finite field. The set $H_{p,a, d}$ is defined by: 
\begin{equation}
   H_{p,a, d}= \{x \in V_a \colon \vert \kappa(x) \vert=p^r, 1 \leq r \leq d\} 
\end{equation}
which is a finite linear subspace of the finite-dimensional $\kappa(a)$-vector space $V_a=V_R \otimes_R \kappa(a)$.  
\par 
Let us fix $p \in \PP$, $a \in S_p$, $d \in \N$ and 
consider the configurations of local defining maps  $s_a, t_a \in \LL(V_a^M, V_a)^G$ where for all element $g \in G$, we define $s_a(g) = s_R(g)\otimes_R \kappa(a)$ 
and $t_a(g) = t_R(g)\otimes_R \kappa(a)$. 
Observe that $s_{a}(g)(H_{p,a,d}^M)$ and $t_{a}(g)(H_{p,a,d}^M)$ are subsets of $H_{p,a,d}$ for all $g \in G$ (cf. e.g. \cite[Lemma~3.1]{phung-geometric}). Consequently, we can define $s_{p,a,d}, t_{p,a,d} \in \LL(H_{p,a,d}^M, H_{p,a,d})$ by setting $s_{p,a,d} = s_a\vert_{H_{p,a,d}}$ and $t_{p,a,d} = t_a\vert_{H_{p,a,d}}$ for all $g \in G$. Thus, we obtain well-defined linear NUCA 
$\sigma_{s_{p,a,d}}, \sigma_{t_{p,a,d}} \colon H_{p,a,d}^G \to H_{p,a,d}^G$. 
\par 
From \eqref{e:local-magic-main-reduction}, it is clear from our construction that for all $g \in M^*$, we have: 
\begin{equation}
    \label{e:reduction-local-magic-main-reduction}
    f^+_{\{g\}, t_{p,a,d}(g)}  \circ f^+_{gM, s_{p,a,d} \vert_{gM}} = \pi^{p,a,d}_{gM^2, \{g\}}
\end{equation}
where $\pi^{p,a,d}_{F, E} \colon H_{p,a,d}^F\to H_{p,a,d}^E$ denotes the canonical projection for all sets $E \subset F$. It follows that $\sigma_{t_{p,a,d}}\circ \sigma_{s_{p,a,d}}=\Id$. In particular, $\sigma_{s_{p,a,d}}$ is left-invertible and we deduce from Theorem~\ref{t:converse-general-stably-injective-left-invertible-linear} that $\sigma_{s_{p,a,d}}$ is stably  injective. Since (i) holds by hypothesis and $H_{p,a,d}$ is a finite $\kappa(a)$-vector space, $\sigma_{s_{p,a,d}}$ is surjective. It follows at once that $\sigma_{s_{p,a,d}}\circ \sigma_{t_{p,a,d}}=\Id$. 
\par 
Therefore, we deduce that for every $g \in M^*$, the equality:   
\begin{equation}
    \label{e:reduction-local-magic-main-reduction-converse}
    f^+_{\{g\}, s_R(g)}  \circ f^+_{gM, t_R \vert_{gM}} = \pi^R_{gM^2, \{g\}} 
\end{equation} 
where $\pi^R_{gM^2, \{g\}} \colon V_R^{gM^2} \to V_R^{\{g\}}$, 
holds over the set $\Delta = \cup_{p \in \PP, s \in S_p,  d\in \N} H_{p,s,d}^{gM^2}$ of all closed points of $(V_R)^{gM^2}$. 
Since $V_R$ is a Jacobson scheme (cf., e.g. \cite[Section~3]{phung-geometric}), an argument using the equalizer as in \cite[Lemma~7.2]{cscp-alg-ca} shows that 
$ f^+_{\{g\}, s_R(g)}  \circ f^+_{gM, t_R \vert_{gM}} = \pi^R_{gM^2, \{g\}} $ as $R$-morphisms $V_R^{gM^2} \to V_R^{\{g\}}$. 
Consequently, we obtain the relation \eqref{e:local-magic-main-reduction-converse} for all $g \in M^*$ 
by making the base change \eqref{e:reduction-local-magic-main-reduction-converse}$\otimes_R k$. 
It follows that $\sigma_s \circ \sigma_t=\Id$ and we can finally conclude that $\tau=\sigma_s$ is surjective. Therefore, we also have (i)$\implies$(ii) and the proof is complete. 
\end{proof}

\section{Applications} 
\label{s:9}

For the proof of Theorem~\ref{t:intro-initially-subamenable}, we first establish the following extension of \cite[Theorem~B]{phung-tcs} and \cite[Theorem~D]{phung-dual-linear-nuca} to cover the case of initially subamenable group universes and finite vector space alphabets:  

\begin{theorem}
\label{t:application-finitely-l-1} 
Every initially amenable group is finitely $L^1$-surjunctive. 
\end{theorem}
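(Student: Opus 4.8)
The plan is to prove finite $L^1$-surjunctivity of initially subamenable groups by adapting the classical Garden-of-Eden argument for amenable groups, combined with the key structural results established earlier. Since finite $L^1$-surjunctivity concerns only finite vector space alphabets $V$, the combinatorial entropy/density counting that underlies the Moore–Myhill theorem becomes available. First I would reduce to the case of a finitely generated infinite group $G$, using the factorization $V^G = \prod_{u \in G/H} V^u$ over cosets of the subgroup $H$ generated by the (finite, symmetric) memory $M$, exactly as in the proof of Theorem~\ref{t:general-stably-injective-left-invertible-linear}: stable injectivity and surjectivity both descend to the restriction on $H$, and $H$ is finitely generated. Given a stably injective $\tau = \sigma_s \in \mathrm{LNUCA}_c(G,V)$ with $s$ asymptotic to a constant configuration $c$, Theorem~\ref{t:general-stably-injective-left-invertible-linear} yields a left inverse $\sigma \in \mathrm{LNUCA}_c(G,V)$ with $\sigma \circ \tau = \Id$; my goal is then to upgrade this to a two-sided inverse, equivalently to show $\tau$ surjective.

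The core idea is to exploit initial subamenability to pull the dynamics back onto genuine amenable groups where the Garden-of-Eden dichotomy holds. Concretely, I would fix the finite memory window and the finite set $E$ outside which $s = c$, and for a large Følner-type finite set I would invoke the defining property of initial subamenability: for every finite $E' \subset G$ there is an amenable group $H'$ and a partial-homomorphism embedding $\varphi \colon E' \to H'$ preserving products that stay inside $E'$. The strategy is to transport the local defining maps $s(g)$ through $\varphi$ to build a linear NUCA $\sigma_{s'}$ over the amenable universe $H'$ on the finite alphabet $V$, arranged so that $\sigma_{s'}$ agrees with $\sigma_s$ on configurations supported within the transported window. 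Because the memory and the non-constancy region are finite and $\varphi$ is a partial homomorphism on a sufficiently large $E'$, the local transition structure of $\sigma_s$ is faithfully reproduced by $\sigma_{s'}$ on the relevant finite patch.

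The next step is to transfer injectivity information. The left-invertibility of $\tau$ provided by Theorem~\ref{t:general-stably-injective-left-invertible-linear} encodes, via the induced local maps $f^+_{E, s\vert_E}$ and the relation of the type $f^+_{\{g\}, t(g)} \circ f^+_{gM, s\vert_{gM}} = \pi_{gM^2, \{g\}}$ appearing in Theorem~\ref{t:converse-general-stably-injective-left-invertible-linear}, a purely local identity that survives transport through $\varphi$ whenever the relevant products lie in $E'$. This forces $\sigma_{s'}$ to be left-invertible over $H'$, hence pre-injective over the amenable group $H'$ on the finite alphabet $V$. I would then apply the Garden-of-Eden theorem for amenable group universes and finite alphabets (the generalization of Moore–Myhill cited in Section~\ref{s:sofic-groups}), which for a finite alphabet equates pre-injectivity with surjectivity; this yields surjectivity of $\sigma_{s'}$, and the surjectivity identity $\sigma_{d'} \circ \sigma_{s'} = \Id$ is again a finite family of local relations of the switched type \eqref{e:local-magic-main-reduction-converse}. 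Transporting these local relations back through $\varphi^{-1}$ on the finite window, and letting $E'$ exhaust $G$, I would recover the two-sided local inverse relations for $\sigma_s$ on all of $G$, whence $\sigma_s \circ \sigma_t = \Id$ and $\tau$ is surjective.

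The main obstacle I anticipate is the bookkeeping that guarantees the transported local relations are \emph{globally consistent} rather than merely valid on each finite window: one must choose the window $E'$ large enough (at least containing $M^* M^*$ or similar products appearing in the induced-map identities) so that every product needed in $f^+_{gM, s\vert_{gM}}$ and its inverse stays inside the domain where $\varphi$ is a homomorphism, and one must ensure the constant tail $c$ is handled uniformly so that the transported NUCA $\sigma_{s'}$ lands in $\mathrm{LNUCA}_c(H', V)$ with a genuine asymptotically constant profile over the amenable universe. A secondary subtlety is that pre-injectivity of $\sigma_{s'}$ must be deduced over $H'$ from the transported left-inverse identities rather than directly from injectivity of $\sigma_s$, since $\varphi$ need not be a global homomorphism; the safe route is to work entirely at the level of the local equalities \eqref{e:local-magic-main-reduction} and \eqref{e:local-magic-main-reduction-converse}, which are finite in nature and transport cleanly, and to invoke Theorem~\ref{t:converse-general-stably-injective-left-invertible-linear} on the amenable side to conclude stable injectivity, hence pre-injectivity, of $\sigma_{s'}$ before applying Garden-of-Eden.
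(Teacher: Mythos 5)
Your overall architecture is the same as the paper's: obtain a left inverse from Theorem~\ref{t:general-stably-injective-left-invertible-linear}, transport the local identities $f^+_{\{g\},t(g)}\circ f^+_{gM,s\vert_{gM}}=\pi_{gM^2,\{g\}}$ through the partial homomorphism $\varphi$ to a NUCA $\sigma_{\tilde s}$ over an amenable group $H'$, deduce surjectivity of $\sigma_{\tilde s}$ there, transport the resulting right-inverse identities back, and let the window exhaust $G$. However, there is a genuine gap at the one step that carries all the weight: you conclude surjectivity of $\sigma_{\tilde s}$ by applying ``the Garden-of-Eden theorem for amenable group universes and finite alphabets'' to $\sigma_{\tilde s}$. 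The theorem cited in Section~\ref{s:sofic-groups} is a statement about \emph{cellular automata}, whereas $\sigma_{\tilde s}$ is a genuinely non-uniform CA (its local maps are non-constant on the transported window), and this is not a mere citation mismatch: the Myhill implication ``pre-injective $\implies$ surjective'' is simply \emph{false} for asymptotically constant linear NUCA, even over $G=\Z$ with alphabet $V=\F_2$. Take $M=\{0,1\}$ and $\sigma_s(x)(n)=x(n)+x(n+1)$ for $n\neq 0$, $\sigma_s(x)(0)=0$. Its kernel is spanned by $\mathbf{1}_{(-\infty,0]}$ and $\mathbf{1}_{[1,\infty)}$, hence contains no nonzero configuration asymptotic to $0$, so $\sigma_s$ is pre-injective; yet its image is $\{y\colon y(0)=0\}$, so $\sigma_s$ is not surjective. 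Thus routing the argument through pre-injectivity and Garden of Eden, as you do (including in your ``safe route'' paragraph), cannot establish surjectivity of $\sigma_{\tilde s}$.

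The fix is available inside your own setup and is exactly what the paper does: you already have the transported identities giving $\sigma_{\tilde t}\circ\sigma_{\tilde s}=\Id$, i.e., left-invertibility, hence injectivity (and, via Theorem~\ref{t:converse-general-stably-injective-left-invertible-linear}, stable injectivity) of $\sigma_{\tilde s}$. The correct amenable-side ingredient is then the surjunctivity theorem for asymptotically constant NUCA over amenable universes with finite alphabets, namely Theorem~B.(i) of \cite{phung-tcs}, applied to the injective $\sigma_{\tilde s}$ --- not the Moore--Myhill theorem applied to a pre-injective NUCA. With that substitution (and the windowing/constant-tail bookkeeping you already describe, which matches the paper's choice of $E\supsetneq M^4$ and of the constant extension by $s(g_0)$, $g_0\in E\setminus M$), your argument becomes the paper's proof; your preliminary reduction to finitely generated groups is harmless but unnecessary.
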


\begin{proof}
Let $G$ be an initially subamenable group and let $V$ be a finite vector space. Suppose that $\tau\in \mathrm{LNUCA}_c(G,V)$ is stably injective. Then we can infer without difficulty from \cite[Theorem~A]{phung-tcs} or \cite[Theorem~B]{phung-dual-linear-nuca} that there exist a large enough finite subset $M \subset G$ and two configurations $s,t \in \LL(V^M, V)^G$ and another  configuration $c \in \LL(V^M, V)^G$ such that $\tau= \sigma_s$, $s\vert_{G \setminus M} = t\vert_{G \setminus M} = 
c\vert_{G \setminus M}$, and $\sigma_t \circ \sigma_s= \Id$. Up to enlarging $M$, we can  suppose without loss of generality that $1_G \in M$ and $M=M^{-1}$. 
\par 
If $G=M^4$ then $G$ is finite and the theorem is trivial since every injective endomorphism of $V^G$ is surjective. Consider the case $M^4 \subsetneq G$. 
Let $E \subset G$ be any finite subset which contains strictly $M^4$, that is, $M^4 \subsetneq E$.   
Since the group $G$ is initially subamenable, we can find an amenable group $H$ and an injective map 
$\varphi \colon E \to H$ such that $\varphi(gh)=\varphi(g) \varphi(h)$ for all $g,h \in E$ such that $gh \in E$. In particular, $\varphi(gh)=\varphi(g) \varphi(h)$ for all $g,h \in M$. Since $M=M^{-1}$ and $1_G \in M$, we deduce that $1_H \in \varphi(M)$ and $\varphi(M)=\varphi(M)^{-1}$. 
\par 
Up to replacing $H$ by the subgroup generated by $\varphi(E)$, we can suppose that $H$ is generated by $\varphi(E)$. As $\sigma_t \circ \sigma_s= \Id$, we deduce for all $g \in E$ that 
\begin{equation}
    \label{e:application-1}
    f^+_{\{g\}, t(g)}  \circ f^+_{gM, s \vert_{gM}} = \pi_{gM^2, \{g\}}
\end{equation}
where we denote by $\pi_{F, Q} \colon V^F\to V^Q$  the canonical linear projection for all sets $Q \subset F$. 
The bijection $\varphi\vert_E \colon E \to  \varphi(E)$ induces in particular an isomorphism $\phi \colon  V^{\varphi(M)} \to V^M$. Let us fix $g_0 \in E \setminus M$. 
The patterns $s\vert_{E}$, $t\vert_E$ in turn induce the configurations $\tilde{s}, \tilde{t}\in \LL(V^{\varphi(M)}, V)^H$ defined by 
$\tilde{s}(h) = s(h) \circ \phi$, $\tilde{t}(h) = t(h) \circ \phi$ for all $h \in \varphi(E)$ and 
$\tilde{s}(h) = s(g_0) \circ \phi$, $\tilde{t}(h) = t(g_0) \circ \phi$ for all $h \in H \setminus \varphi(E)$. 
\par 
Since $\varphi$ is injective, it follows from \eqref{e:application-1} that for all $h \in \varphi(E)$, we have:  
\begin{equation}
    \label{e:application-2}
    f^+_{\{h\}, \tilde{t}(h)}  \circ f^+_{h\varphi(M), \tilde{s} \vert_{h\varphi(M)}} = \pi_{h\varphi(M^2), \{h\}}. 
\end{equation}
\par 
Consequently, we deduce that $\sigma_{\tilde{t}}\circ \sigma_{\tilde{s}}=\Id$. In particular, $\sigma_{\tilde{s}}$ is injective. Since $H$ is amenable and $\tilde{s}$ is asymptotically constant by construction, we infer from \cite[Theorem~B.(i)]{phung-tcs} that $\tilde{s}$ is surjective. Hence, it follows from $\sigma_{\tilde{t}}\circ \sigma_{\tilde{s}}=\Id$ that 
$\sigma_{\tilde{s}}\circ \sigma_{\tilde{t}}=\Id$. We deduce that for every $h \in \varphi(E)$, we have: 
\begin{equation}
    \label{e:application-3}
    f^+_{\{h\}, \tilde{s}(h)}  \circ f^+_{h\varphi(M), \tilde{t} \vert_{h\varphi(M)}} = \pi_{h\varphi(M^2), \{h\}}. 
\end{equation}
\par 
Therefore, via the injection $\varphi$, we obtain for all $g \in E$ that 
\begin{equation}
    \label{e:application-4}
    f^+_{\{g\}, s(g)}  \circ f^+_{gM, t \vert_{gM}} = \pi_{gM^2, \{g\}}.
\end{equation}
\par 
By the choice of $E$ and $\varphi$, we can thus conclude that $\sigma_s \circ \sigma_t= \Id$ which implies in particular that $\sigma_s$ is surjective. The proof is thus complete. 
\end{proof}

\par 
Observe that by a similar  argument, \cite[Theorem~B.(i)]{phung-tcs} also holds for initially subamenable group universes. 
As an immediate consequence of Theorem~\ref{t:main-first-intro} and Theorem~\ref{t:application-finitely-l-1}, we obtain the proof of Theorem~\ref{t:intro-initially-subamenable} in the Introduction as follows:  
\begin{proof}[Proof of Theorem~\ref{t:intro-initially-subamenable}] 
Thanks to Theorem~\ref{t:main-first-intro}, 
we infer respectively from from Theorem~\ref{t:application-finitely-l-1} and \cite[Theorem~B.(ii)]{phung-tcs} that 
all initially amenable groups and all residually finite groups are $L^1$-surjunctive. We can thus conclude the proof of the theorem since dual $L^1$-surjunctivity is equivalent to $L^1$-surjunctivity 
also by Theorem~\ref{t:main-first-intro}. 
\end{proof}

\bibliographystyle{siam}

\end{document}